\newcommand{\ncom}{\newcommand}
\ncom{\dho}{\partial}
\ncom{\rar}{\rightarrow}
\ncom{\imply}{\Rightarrow}
\ncom{\lrar}{\longrightarrow}
\ncom{\into}{\hookrightarrow}
\ncom{\onto}{\twoheadrightarrow}
\ncom{\ov}{\overline}
\ncom{\m}{\mbox}
\ncom{\sta}{\stackrel}
\ncom{\invlim}{\varprojlim}
\ncom{\xhat}{\widehat}
\ncom{\vspc}{\vspace{3mm}}
\ncom{\End}{{\cE}nd}
\ncom{\tensor}{\otimes}
\ncom{\al}{\alpha}
\ncom{\cHom}{{\mathcal Hom}}
\ncom{\A}{{\mathbb A}}
\ncom{\comx}{{\mathbb C}}
\ncom{\E}{{\mathbb E}}
\ncom{\F}{{\mathbb F}}
\ncom{\G}{{\mathbb G}}
\ncom{\K}{{\mathbb K}}
\ncom{\Le}{{\mathbb L}}
\ncom{\N}{{\mathbb N}}
\ncom{\p}{{\mathbb P}}
\ncom{\Q}{{\mathbb Q}}
\ncom{\R}{{\mathbb R}}
\ncom{\Z}{{\mathbb Z}}
\ncom{\f}{\dfrac}
\ncom{\wtil}{\widetilde}
\ncom{\ci}{{\mathpzc i}}
\ncom{\cA}{{\mathcal A}}
\ncom{\cC}{{\mathcal C}}
\ncom{\cE}{{\mathcal E}}
\ncom{\cF}{{\mathcal F}}
\ncom{\cG}{{\mathcal G}}
\ncom{\cH}{{\mathcal H}}
\ncom{\cI}{{\mathcal I}}
\ncom{\cJ}{{\mathcal J}}
\ncom{\cK}{{\mathcal K}}
\ncom{\cL}{{\mathcal L}}
\ncom{\cM}{{\mathcal M}}
\ncom{\cN}{{\mathcal N}}
\ncom{\cO}{{\mathcal O}}
\ncom{\cP}{{\mathcal P}}
\ncom{\cQ}{{\mathcal Q}}
\ncom{\cR}{{\mathcal R}}
\ncom{\cS}{{\mathcal S}}
\ncom{\cT}{{\mathcal T}}
\ncom{\cU}{{\mathcal U}}
\ncom{\cV}{{\mathcal V}}
\ncom{\cW}{{\mathcal W}}
\ncom{\cX}{{\mathcal X}}
\ncom{\cY}{{\mathcal Y}}
\ncom{\cZ}{{\mathcal Z}}
\ncom{\cSU}{{\mathcal S \mathcal U}}
\ncom{\eop}{{\hfill $\Box$}}
\ncom{\isom}{\cong}
\theoremstyle{plain}
\newtheorem{theorem}{Theorem}
\newtheorem{lemma}[theorem]{Lemma}
\newtheorem{corollary}[theorem]{Corollary}
\theoremstyle{definition}
\newtheorem{defn}{Definition}
\theoremstyle{remark}
\newtheorem{remark}{Remark}
\long\def\comment#1{}
\begin{document}

\title{On Auslander's depth formula}

\author{Shashi Ranjan Sinha}
\address{Department of Mathematics, Indian Institute of Technology -- Hyderabad, 502285, India.}
\email{ma20resch11005@iith.ac.in}

\author{Amit Tripathi}
\address{Department of Mathematics, Indian Institute of Technology -- Hyderabad, 502285, India.}
\email{amittr@gmail.com}
\subjclass[2010]{13D07, 13C14, 13C15}

\begin{abstract} We show that if Auslander's depth formula holds for non-zero Tor-independent modules over Cohen-Macaulay local rings of dimension $1$, then it holds for such modules over any Cohen-Macaulay local ring. More generally, we show that the depth formula for non-zero Tor-independent modules which have finite Cohen-Macaulay dimension over depth $1$ local rings implies the depth formula for such modules over any positive depth local ring.
\end{abstract}

\keywords{Auslander, depth formula, tensor product}
\date{\today}
\maketitle

All rings are assumed to be commutative Noetherian local and all modules are assumed to be finitely generated. We say that a pair $(M,N)$ of $R$-modules satisfies Auslander's depth formula (or simply \textit{the depth formula}) if \begin{align*} 
	depth\, M + depth\, N = depth\, R + depth\, M \otimes_R N.
\end{align*}
We say that $M$ and $N$ are {Tor-independent over $R$} if $Tor_i^R(M,N) = 0$ for all $i \geq 1.$  

Auslander \cite{A1} proved the depth formula under the assumption that the $R$-modules $M$ and $N$ are Tor-independent and the projective dimension of $M$ is finite. This formula was shown to hold for Tor-independent modules over complete intersection local rings by Huneke and Wiegand \cite{HW}.  Their result was generalized to Tor-independent modules over arbitrary local rings, under the additional assumption that one of the modules has finite complete intersection dimension, by Araya and Yoshino \cite{ArYos}, and independently by Iyengar \cite{SI}. Gheibi, Jorgensen, and Takahashi \cite{GJT} have proved the depth formula for Tor-independent modules when one of the modules has a finite quasi-projective dimension.

More generally, we can ask if $(M, N)$ is a pair of $R$-modules satisfying a Tor vanishing condition (Absolute, Relative or Tate Tor, see Avramov and Martsinkovsky\cite{AM}) such that one or both the modules have a finite homological dimension, then the pair $(M, N)$ satisfies the depth formula. In this direction, various versions have been established - see Bergh and Jorgensen \cite{BerJo2}, Celikbas, Liang, and Sadeghi \cite{CLS}, and Christensen and Jorgensen \cite{CJ}, for instance.

In this note, we restrict to the "classical" version of the depth formula where the Tor vanishing condition is for absolute Tor. In this form, it is unknown whether the depth formula is true, even for Gorenstein local rings. The most general result seems to be \cite{CJ} which proves the depth formula over $AB$-rings.

In \cite[Example 2.9]{ArYos} Araya and Yoshino construct $R$-modules $M$ and $N$, where $R$ is a formal power series ring in 5 variables over a field $k$, such that $pd_R(N) = 1$ and $Tor_i(M, N) = 0$ for $i \geq 2,$ but $depth \, M + depth\, N \neq depth\, R + depth\, M\otimes_R N.$ This shows that the Tor-independence condition can not be relaxed.

We show that,

\begin{theorem} \label{corollary_main}
	If the depth formula holds for non-zero Tor-independent modules over Cohen-Macaulay local rings of dimension 1, then it holds for such modules over any Cohen-Macaulay local ring.  
\end{theorem}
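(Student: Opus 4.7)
I would proceed by induction on $d := \dim R$. The case $d = 0$ is trivial (both sides are $0$) and $d = 1$ is the hypothesis. Suppose $d \geq 2$ and that the theorem is known in dimensions strictly less than $d$.

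The engine of the induction is to mod out by a single element. Specifically, I would try to find $x \in \mathfrak{m}$ which is regular on each of $R$, $M$, $N$, and $M \otimes_R N$. Given such an $x$, set $\bar R := R/xR$, $\bar M := M/xM$, $\bar N := N/xN$, and note $\bar M \otimes_{\bar R} \bar N = (M \otimes_R N)/x(M \otimes_R N)$. Then $\bar R$ is Cohen--Macaulay of dimension $d - 1$ and each of the four depths drops by exactly $1$; a standard change-of-rings computation gives $\operatorname{Tor}_i^{\bar R}(\bar M, \bar N) \cong \operatorname{Tor}_i^R(M, \bar N)$ (valid because $x$ is $M$-regular), while the long exact sequence of $\operatorname{Tor}^R(M, -)$ applied to $0 \to N \xrightarrow{x} N \to \bar N \to 0$ --- combined with Tor-independence of $(M, N)$ and regularity of $x$ on $M \otimes_R N$ --- forces $\operatorname{Tor}_i^R(M, \bar N) = 0$ for $i \geq 1$. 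Thus $(\bar M, \bar N)$ is Tor-independent over $\bar R$, and the depth formula furnished by the induction hypothesis at dimension $d - 1$ lifts verbatim to the depth formula for $(M, N)$, since both sides of the formula shift by $-2$.

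By prime avoidance, such an $x$ exists once every module in the list $\{R, M, N, M \otimes_R N\}$ has positive depth. The ring $R$ causes no trouble ($d \geq 1$). To arrange positive depth for $M$ (symmetrically for $N$), I would use a syzygy reduction: if $\operatorname{depth} M = 0$, replace $M$ by a first syzygy $\Omega M$ appearing in a short exact sequence $0 \to \Omega M \to F \to M \to 0$ with $F$ free. Then $\operatorname{depth} \Omega M \geq 1$, the pair $(\Omega M, N)$ is still Tor-independent (a degree shift of the Tor vanishing), and the short exact sequence $0 \to \Omega M \otimes N \to F \otimes N \to M \otimes N \to 0$ (exact by Tor-independence) together with the depth lemma shows that the depth formula for $(\Omega M, N)$ implies the depth formula for $(M, N)$. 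After at most finitely many iterations on either side, we may assume $\operatorname{depth} M, \operatorname{depth} N \geq 1$.

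The genuine obstacle, and in my view the main technical content of the proof, is the assertion $\operatorname{depth}(M \otimes_R N) \geq 1$, for otherwise no such $x$ exists. One natural route is to deploy the base case at every prime $\mathfrak{p}$ of height $1$: because Tor-independence is preserved under localization and $R_\mathfrak{p}$ is Cohen--Macaulay of dimension $1$, the hypothesis applies to $(M_\mathfrak{p}, N_\mathfrak{p})$ and gives $\operatorname{depth}_{R_\mathfrak{p}}(M \otimes_R N)_\mathfrak{p} \geq 1$, i.e., $M \otimes_R N$ satisfies Serre's condition $S_1$ at every height-one point of $\operatorname{Spec}(R)$. This is tantalizingly close to, but not immediately the same as, $\operatorname{depth}_R(M \otimes_R N) \geq 1$. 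Bridging this gap --- whether by a further syzygy/approximation argument, by an Auslander-type depth inequality forcing $\operatorname{depth}(M \otimes_R N) \geq 1$ once $M$ and $N$ have positive depth, or by directly analyzing $\operatorname{Ass}(M \otimes_R N)$ under Tor-independence --- is where I expect the bulk of the real work to lie.
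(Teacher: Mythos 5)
Your skeleton --- induct on dimension, reduce to the case where $M$ and $N$ have positive depth by passing to syzygies (using the depth lemma on $0 \to \Omega M \otimes N \to F \otimes N \to M \otimes N \to 0$), then cut by an element regular on $R$, $M$, $N$ and $M \otimes_R N$ and check that Tor-independence descends --- is exactly the skeleton of the paper's argument (Lemmas \ref{lemma_spect}--\ref{lemma_depth_reduct} and Lemma \ref{lemma_Gdim_inequality}). But the step you flag as "the genuine obstacle," namely $\operatorname{depth}(M \otimes_R N) \geq 1$, is indeed the entire content of the theorem, and your proposal does not close it. The localization route you sketch cannot work as stated: knowing $\operatorname{depth}_{R_\mathfrak{p}}(M\otimes N)_\mathfrak{p} \geq 1$ at height-one primes says nothing about whether $\mathfrak{m}$ itself is an associated prime of $M \otimes N$ when $\dim R \geq 2$; moreover the base case need not even apply at such $\mathfrak{p}$ (the localized modules may vanish, and if both have depth $0$ over $R_\mathfrak{p}$ the "formula" at $\mathfrak{p}$ is vacuous rather than informative). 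So the proposal is an accurate outline with the decisive step missing.

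The paper's resolution is worth contrasting with your plan, because it sidesteps proving $\operatorname{depth}(M\otimes N) > 0$ unconditionally. It first passes through the more general Theorem \ref{theorem_main}, phrased for modules of finite Cohen--Macaulay dimension over rings of positive depth (over a Cohen--Macaulay ring every module has finite $CM$-dimension by Gerko's Theorem \ref{theorem_gerko}, which is how Theorem \ref{corollary_main} reduces to it). Two inequalities are then proved by separate inductions: $\operatorname{depth} R + \operatorname{depth}(M\otimes N) \geq \operatorname{depth} M + \operatorname{depth} N$ (Theorem \ref{theorem_main_inequality}, via syzygies of $N$ and the already-established special cases {\bf Q1}, {\bf Q2} where one module has $CM$-dimension $0$), and $\operatorname{depth} M + \operatorname{depth} N \geq \operatorname{depth} R$ (Lemma \ref{lemma_main}, which crucially uses the depth-one hypothesis in the form {\bf P0}: over a depth-one ring there is no Tor-independent pair with both depths zero --- this is forced by your hypothesis, since otherwise the formula would yield $\operatorname{depth}(M\otimes N) = -1$). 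If the depth formula failed for some pair, the first inequality would be strict, and combined with the second this gives $\operatorname{depth}(M\otimes N) > 0$ \emph{for the putative counterexample}, which is all that is needed to cut by a regular element and contradict minimality of the depth. Note also that the auxiliary cases {\bf Q1} and {\bf Q2} themselves require a nontrivial syzygy-juggling argument (Lemma \ref{lemma_depth_M_N_positive}): one works with $\Omega_{d-1}(M)$, whose tensor product with $N$ automatically has depth $\geq d-1$, descends to a depth-one ring, invokes the hypothesis there, and climbs back up. None of this machinery is present in your proposal, so the bulk of the real work remains to be done exactly where you predicted.
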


This result follows as a corollary to the more general result stated below.
\begin{theorem} \label{theorem_main} 
	If the depth formula holds for non-zero Tor-independent modules with finite Cohen-Macaulay dimension over local rings with depth $1$, then it holds for such modules over any local ring with positive depth.  
\end{theorem}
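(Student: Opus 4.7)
My plan is to induct on $d = \operatorname{depth} R$. The base case $d = 1$ is precisely the given hypothesis. For $d \geq 2$, assuming the conclusion for rings of depth $d - 1$, I would find an element $x \in \mathfrak{m}$ that is simultaneously regular on $R$, $M$, and $N$, pass to $\bar{R} = R/xR$, $\bar{M} = M/xM$, $\bar{N} = N/xN$, and apply the induction hypothesis to $(\bar{M}, \bar{N})$ over $\bar{R}$, which has depth $d - 1 \geq 1$.

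For the reduction to yield the depth formula over $R$ from the depth formula over $\bar{R}$, three things must be verified. First, each depth in the formula drops by exactly one when passing to $\bar{R}$: $x$-regularity on $R$, $M$, $N$ combined with Tor-independence forces $x$ to be regular on $M \otimes_R N$ as well, via the long exact sequence for $\operatorname{Tor}^R(-, N)$ coming from $0 \to M \xrightarrow{x} M \to \bar{M} \to 0$. Second, $\bar{M}$ and $\bar{N}$ must be Tor-independent over $\bar{R}$: since $x$ is $M$-regular, a projective $R$-resolution of $M$ reduces modulo $x$ to a projective $\bar{R}$-resolution of $\bar{M}$, and a change-of-rings calculation using $0 \to N \xrightarrow{x} N \to \bar{N} \to 0$ together with $\operatorname{Tor}^R_{\geq 1}(M, N) = 0$ yields $\operatorname{Tor}^{\bar{R}}_{\geq 1}(\bar{M}, \bar{N}) = 0$. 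Third, $\bar{M}$ (and $\bar{N}$) retain finite Cohen-Macaulay dimension over $\bar{R}$, a standard base-change property under reduction by an element regular on both the ring and the module.

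Producing such an $x$ is possible by prime avoidance exactly when none of $R$, $M$, $N$ has $\mathfrak{m}$ as an associated prime, i.e., when each has positive depth. If $\operatorname{depth} M = 0$, I would first replace $M$ by its first syzygy $\Omega M$ in a minimal free resolution $0 \to \Omega M \to F \to M \to 0$; finite CM-dimension and Tor-independence with $N$ are preserved, and the depth lemma (using $\operatorname{depth} M < \operatorname{depth} R$, which holds by finite CM-dimension and $d \geq 2$) gives $\operatorname{depth} \Omega M = \operatorname{depth} M + 1 \geq 1$. By Tor-independence the sequence $0 \to \Omega M \otimes N \to F \otimes N \to M \otimes N \to 0$ is exact, and applying the depth lemma to it allows the depth formula for $(\Omega M, N)$ to be transferred back to $(M, N)$. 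The case $\operatorname{depth} N = 0$ is handled symmetrically using a syzygy of $N$.

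The main obstacle I anticipate is this last transfer step: deducing the depth formula for $(M, N)$ from that for $(\Omega M, N)$ via the depth lemma is not automatic, since the depth lemma alone yields only inequalities. One obtains the needed equality cleanly when $\operatorname{depth} M \otimes N < \operatorname{depth} N$, but a delicate boundary case arises when $\operatorname{depth} \Omega M \otimes N = \operatorname{depth} N$, corresponding to $\operatorname{depth} M = \operatorname{depth} R - 1$; there one must argue more carefully, likely iterating the syzygy construction until one reaches a maximal Cohen-Macaulay syzygy and then leveraging the finite CM-dimension hypothesis to extract the missing equality.
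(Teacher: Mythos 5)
Your reduction hinges on a claim that is false, and it is false at exactly the point where the real difficulty of the theorem lives. You assert that if $x$ is regular on $R$, $M$ and $N$, then Tor-independence forces $x$ to be regular on $M\otimes_R N$, ``via the long exact sequence'' from $0 \to M \xrightarrow{x} M \to \overline{M}\to 0$. That sequence gives
$$0 = Tor_1^R(M,N) \rar Tor_1^R(\overline{M},N) \rar M\otimes N \xrightarrow{x} M\otimes N ,$$
so the kernel of multiplication by $x$ on $M\otimes N$ is $Tor_1^R(\overline{M},N)$ --- the first Tor of a \emph{different} pair of modules, whose vanishing is not implied by $Tor_{\geq 1}^R(M,N)=0$. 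This is precisely the content of Lemma \ref{lemma_depth}: $depth\, M\otimes N>0$ if and only if $Tor_1^R(\overline{M},N)=0$ for some $M$-regular $x$. The same gap reappears in your change-of-rings verification of $Tor^{\overline R}_{\geq 1}(\overline M,\overline N)=0$, whose degree-one term again computes $\ker(x\colon M\otimes N\to M\otimes N)$. Positivity of $depth\, M\otimes N$ is not a routine check but essentially the theorem itself: even when the depth formula holds one has $depth\,M\otimes N=0$ whenever $depth\,M+depth\,N=depth\,R$, e.g. two depth-one modules over a depth-two ring, so no choice of $x$ can be regular on $M\otimes N$ there. If your claim were correct, Auslander's formula would follow almost immediately in full generality.

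The paper's proof is organized around circumventing exactly this obstruction. It first proves $depth\,M\otimes N>0$ when both modules have CM-dimension $0$ (Lemma \ref{lemma_depth_M_N_positive}) by replacing $M$ with the syzygy $\Omega_{d-1}(M)$, whose tensor product with $N$ visibly has positive depth, descending to a depth-one ring, invoking the hypothesis {\bf P1} there, and climbing back up; only then can Lemma \ref{lemma_depth_reduct} be applied. It then establishes the two opposite inequalities (Theorem \ref{theorem_main_inequality} and Lemma \ref{lemma_main}) separately before combining them. Your second, correctly flagged obstacle --- transferring the depth formula from $(\Omega_1 M,N)$ back to $(M,N)$ when the depth lemma yields only inequalities --- is likewise where the paper needs the auxiliary statements {\bf P0} and {\bf P2} and a contradiction argument; ``iterate to a maximal Cohen--Macaulay syzygy'' is the right instinct but does not by itself close the boundary case you identify. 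As written, the proposal is not a proof.
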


\begin{remark}
	The proof of Theorem \ref{corollary_main} will work if we assume the underlying ring to be a Gorenstein ring.  The proof of Theorem \ref{theorem_main} will work if replace  "Cohen-Macaulay dimension" with other homological dimensions like  Gorenstein dimension $G\mbox{-}dim_R(M)$, upper Gorenstein dimension $G^*\mbox{-}dim_R(M)$, lower complete intersection dimension $CI_*\mbox{-}dim_R(M)$, or any other homological dimension which satisfies some standard properties (see Theorem \ref{theorem_H_dim}). We have demonstrated the main idea using the Cohen-Macaulay dimension as it is the most general homological dimension (see Theorem \ref{theorem_H_dim_inequalities}) in the list given above.
\end{remark}


\subsection*{Acknowledgements} The authors thank Jishnu Biswas and Suresh Nayak for helpful comments and suggestions. The first author was partially supported by a CSIR senior research fellowship through the grant no. 09/1001(0097)/2021-EMR-I. The second author was partially supported by a Science and Engineering Research Board (SERB) grant MTR/2020/000164.

\section{Preliminaries}
Throughout, $R$ denotes a local ring $(R,m,k)$ and all $R$-modules are finitely generated. For any $R$-module $W$ and an element $x \in m$, we define $\overline{W}_x :=W \otimes_R R/xR.$ When $x$ is clear from the context, we will write simply $\overline{W}.$  

We will assume that $M$ and $N$ are $R$-modules with following minimal free resolutions
\begin{align}
\label{eqn_M} \cdots \xrightarrow{f_3} P_2 \xrightarrow{f_2} P_1 \xrightarrow{f_1}  P_0 \xrightarrow{f_0} M \rar 0 \\ 
\label{eqn_N} \cdots \xrightarrow{g_3} Q_2 \xrightarrow{g_2} Q_1 \xrightarrow{g_1}  Q_0 \xrightarrow{g_0} N \rar 0
\end{align} Let $\Omega_i(M)$ be the $i$'th syzygy of $M.$ In particular, $\Omega_1(M) = Ker(P_0 \rar M).$ Let $\Omega_i(N)$ be the $i$'th syzygy of $N.$

For any $R\mbox{-}$module $M$, there are several possible homological dimensions such as its projective dimension $pd_R(M)$, Gorenstein dimension $G\mbox{-}dim_R(M)$ \cite{AB} , complete intersection dimension  $CI\mbox{-}dim_R(M)$ \cite{AGP}, lower complete intersection dimension  $CI_{*}\mbox{-}dim_R(M)$ \cite{Gerko} , upper Gorenstein dimension  $G^{*}\mbox{-}dim_R(M)$ \cite{Veliche} or Cohen-Macaulay dimension $CM\mbox{-}dim_R(M)$  \cite{Gerko}. These homological dimensions share some common properties which we summarize below.
\begin{theorem}\cite[Theorem 8.7]{Av} \label{theorem_H_dim}
	Let $M \neq 0$ be an $R$-module. Let $H\mbox{-}dim_{R}(M)$ be a homological dimension of $M$ where $H\mbox{-}dim$ can be  $G\mbox{-}dim$, $CM\mbox{-}dim$, $CI\mbox{-}dim$, $CI_*\mbox{-}dim$, or $G^*\mbox{-}dim.$ 
	\begin{enumerate}[label=(\alph*)]

		\item \label{thm_AB}  If $H\mbox{-}dim_{R} (M) < \infty$, then $depth\, M + H\mbox{-}dim_{R} (M) =  depth\, R.$ In particular, $H\mbox{-}dim_{R} (M) \leq depth\, R.$ 
		\item $H\mbox{-}dim_R(\Omega_n(M)) = \max\{H\mbox{-}dim_R(M) - n,0\}.$ 
	 \item If $x \in m$ be an element regular on $R$ and $M$, then
		 \begin{enumerate}[label=(\alph*)]
			 		\item $H\mbox{-}dim_{R}(\overline{M}) = H\mbox{-}dim_R(M) +1.$
		 			\item  If $H\mbox{-}dim_{R} (M) < \infty$, then  $H\mbox{-}dim_{\overline{R}}(\overline{M}) = H\mbox{-}dim_R(M).$  
		 \end{enumerate}

	\end{enumerate}
\end{theorem}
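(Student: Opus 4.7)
The plan is to argue by induction on $d = depth\, R$, with the base case $d = 1$ being precisely the hypothesis of the theorem. For the inductive step $d \geq 2$, my strategy will be to find a regular element $x \in m$ which is also regular on $M$, $N$, and $M \otimes_R N$, then pass to $\overline{R} = R/xR$ (of depth $d - 1 \geq 1$) and apply the inductive hypothesis to $(\overline{M}, \overline{N})$. Granting this, the depth formula over $\overline{R}$ translates cleanly to the depth formula over $R$: one has $depth_R(\overline{W}) = depth_R(W) - 1$ for each $W \in \{R, M, N, M \otimes_R N\}$ by regularity of $x$, and $\overline{M} \otimes_{\overline{R}} \overline{N} = \overline{M \otimes_R N}$, so the four shifts by $-1$ cancel to give the depth formula for $(M, N)$.

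For this reduction to work, the pair $(\overline{M}, \overline{N})$ must inherit the hypotheses over $\overline{R}$. Non-vanishing is Nakayama; finite Cohen-Macaulay dimension of $\overline{M}$ over $\overline{R}$ is Theorem \ref{theorem_H_dim}(c). The most delicate point is Tor-independence, which I will verify by the standard change-of-rings argument: regularity of $x$ on $R$ and on $M$ makes $P_\bullet/xP_\bullet$ an $\overline{R}$-free resolution of $\overline{M}$, giving $Tor_i^{\overline{R}}(\overline{M}, \overline{N}) \cong Tor_i^R(M, \overline{N})$. The long exact sequence associated to $0 \to N \xrightarrow{x} N \to \overline{N} \to 0$, combined with $Tor_i^R(M, N) = 0$ for all $i \geq 1$, then yields $Tor_i^R(M, \overline{N}) = 0$ for $i \geq 2$ and $Tor_1^R(M, \overline{N}) = (0 :_{M \otimes_R N} x)$, which vanishes precisely because $x$ was chosen regular on $M \otimes_R N$.

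The main obstacle will be producing such an $x$ at all: by prime avoidance, one needs $depth\, M \geq 1$, $depth\, N \geq 1$, and $depth(M \otimes_R N) \geq 1$ simultaneously, and none is guaranteed a priori. I will force this by replacing $M$ and $N$ by appropriate syzygies. Theorem \ref{theorem_H_dim}(a,b) shows $depth\, \Omega_1(M) = depth\, M + 1$ whenever $M$ is not maximal Cohen-Macaulay, a parallel local-cohomology argument applied to \eqref{eqn_N} raises $depth\, N$ by one whenever $depth\, N < d$, and Tor-independence of the new pair follows from the long exact sequence of $Tor$. The short exact sequences
\[ 0 \to \Omega_1(M) \otimes N \to P_0 \otimes N \to M \otimes N \to 0 \qquad \text{and} \qquad 0 \to M \otimes \Omega_1(N) \to M \otimes Q_0 \to M \otimes N \to 0, \]
exact by Tor-independence, combined with the depth lemma, let me transfer the depth formula up and down the syzygy tower: whenever $depth(M \otimes N) < depth\, N$ (respectively $< depth\, M$), the depth lemma forces $depth(\Omega_1(M) \otimes N) = depth(M \otimes N) + 1$ (respectively the analogous statement for $\Omega_1 N$). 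After finitely many such replacements, I obtain a pair $(M', N')$ with all three relevant depths positive and whose depth formula is equivalent to that for $(M, N)$, and then the reduction in the first paragraph concludes the inductive step. The most delicate technicality is the boundary case in which some depth coincides with $d - 1$, where the depth-lemma identification is non-strict in one direction of the biconditional; I expect to handle this by syzygizing the other module so that the strict inequality is restored.
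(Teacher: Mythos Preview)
Your proposal does not address the stated theorem at all. Theorem~\ref{theorem_H_dim} is a compilation of standard properties of the homological dimensions $G\mbox{-}dim$, $CM\mbox{-}dim$, $CI\mbox{-}dim$, $CI_*\mbox{-}dim$, $G^*\mbox{-}dim$ (the Auslander--Bridger formula, behaviour under syzygies, and behaviour under reduction by a regular element), and the paper's ``proof'' consists solely of citations to \cite{AB,Masek,AGP,Veliche,Gerko}. There is no pair $(M,N)$ in the statement, no Tor-independence hypothesis, and no depth-formula conclusion.

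What you have written is instead a sketch of an attack on Theorem~\ref{theorem_main} (the implication $\textbf{P}\Rightarrow\textbf{Q}$): you set up induction on $depth\,R$, reduce modulo a regular element, and replace $M,N$ by syzygies to force positive depth on $M$, $N$, and $M\otimes_R N$. You even invoke Theorem~\ref{theorem_H_dim}(c) as an ingredient, which makes the mismatch explicit. As a proof of Theorem~\ref{theorem_H_dim} this is simply off-target; none of the three assertions (a), (b), (c) is touched. If the intent was Theorem~\ref{theorem_main}, note that the paper's argument is more elaborate than a single inductive reduction: it separates out the sub-statements \textbf{P0}, \textbf{P1}, \textbf{P2}, establishes \textbf{Q1} and \textbf{Q2} first, proves the inequality of Theorem~\ref{theorem_main_inequality} and Lemma~\ref{lemma_main} separately, and only then closes the loop---precisely because the ``boundary cases'' you flag at the end (where one cannot guarantee $depth\,M\otimes_R N>0$, or where the depth-lemma biconditional degenerates) do not resolve as cleanly as your outline suggests.
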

\begin{proof}
The assertion for $H= G$ is from \cite{AB} (and \cite{Masek} for a corrected proof of \ref{thm_AB}). The proof for $H = CI, G^*,$ and $CM/CI_*$ can be found in \cite{AGP}, \cite{Veliche}, and \cite{Gerko}, respectively. 
\end{proof}

\begin{remark}
If $H\mbox{-}dim_{R} (M), H\mbox{-}dim_{R} (N)$ and $H\mbox{-}dim_{R} (M \otimes_R N)$ are all finite, then by Theorem \ref{theorem_H_dim}   \ref{thm_AB}, we can also restate the depth formula as $$H\mbox{-}dim_{R} (M)  + H\mbox{-}dim_{R} (N)  = H\mbox{-}dim_{R} (M \otimes N).$$
\end{remark}

These homological dimensions satisfy the following inequalities  

\begin{theorem}\cite[Theorem 8.8]{Av} \label{theorem_H_dim_inequalities}
	With the notation above, we have
	\begin{enumerate}
		\item $CM\mbox{-}dim_R(M) \leq  G\mbox{-}dim_R(M)  \leq G^*\mbox{-}dim_R(M) \leq CI\mbox{-}dim_R(M).$ 
		\item $CM\mbox{-}dim_R(M) \leq G\mbox{-}dim_R(M) \leq CI_*\mbox{-}dim_R(M) \leq CI\mbox{-}dim_R(M).$ 
	\end{enumerate} where finiteness at any point implies that all the inequalities to its left are equalities.
\end{theorem}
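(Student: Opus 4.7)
The plan is to exploit the common definitional framework: the dimensions $G^*\text{-}dim$, $CI\text{-}dim$, $CI_*\text{-}dim$, and $CM\text{-}dim$ are all infima of $pd_Q(M \otimes_R R') - pd_Q(R')$ over classes of quasi-deformations $R \to R' \leftarrow Q$, the classes differing only in (i) whether $Q$ is required to be a complete intersection, Gorenstein, or Cohen-Macaulay, and (ii) the regularity/surjectivity conditions imposed on $Q \twoheadrightarrow R'$. Since complete intersection implies Gorenstein implies Cohen-Macaulay, the classes are nested as $\mathcal{C}_{CI} \subseteq \mathcal{C}_{G^*} \subseteq \mathcal{C}_{CM}$ and $\mathcal{C}_{CI} \subseteq \mathcal{C}_{CI_*}$, and enlarging the class of quasi-deformations can only decrease the infimum. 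This immediately yields all the inequalities in (1) and (2) that do not involve $G\text{-}dim$.

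For the comparisons with $G\text{-}dim$, which is classically defined via resolutions by totally reflexive modules rather than via quasi-deformations, I would proceed by descent: given a quasi-deformation $R \to R' \leftarrow Q$ with $Q$ Gorenstein (respectively the $CI_*$ setup) and the displayed projective dimension finite, the module $M \otimes_R R'$ is totally reflexive over $R'$, and faithful flatness of $R \to R'$ transfers this to total reflexivity over $R$; tracking the number of steps needed gives $G\text{-}dim_R(M) \leq G^*\text{-}dim_R(M)$ and $G\text{-}dim_R(M) \leq CI_*\text{-}dim_R(M)$. The remaining inequality $CM\text{-}dim_R(M) \leq G\text{-}dim_R(M)$ is handled by exhibiting the trivial quasi-deformation $R = R' = Q$ whenever $M$ has finite Gorenstein dimension, which produces a Cohen-Macaulay quasi-deformation realizing the same value.

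For the finiteness clause, the Auslander-Buchsbaum-type identity of Theorem \ref{theorem_H_dim}\ref{thm_AB} is the lever. If $H\text{-}dim_R(M)$ is finite at some point in either chain, every $H'\text{-}dim_R(M)$ to its left is also finite by the inequality just proved, and then Theorem \ref{theorem_H_dim}\ref{thm_AB} forces each such finite dimension to equal $depth\, R - depth\, M$, giving equality throughout. The main obstacle will be the $G\text{-}dim$ comparisons: the descent arguments require careful handling of reflexivity along the faithfully flat map $R \to R'$ and correct accounting for the $pd_Q(R')$ correction term. One also has to parse the precise definitions of $CI_*\text{-}dim$ and $G^*\text{-}dim$ from Gerko and Veliche to confirm that the class containments are actually strict in the sense needed, since both relaxations of $CI\text{-}dim$ weaken different pieces of the definition and must be verified independently.
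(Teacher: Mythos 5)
Your overall architecture --- formal inequalities from nested classes of quasi-deformations, two substantive comparisons handled separately, and the finiteness clause deduced from the Auslander--Buchsbaum-type formula of Theorem \ref{theorem_H_dim}\ref{thm_AB} --- has the right shape, and the finiteness-clause argument is exactly right. Note, though, that the paper does not prove this statement at all: it quotes \cite[Theorem 8.8]{Av} and merely records that every inequality is definitional except $G\mbox{-}dim_R(M) \leq G^*\mbox{-}dim_R(M)$ (from \cite{Veliche}) and $CI_*\mbox{-}dim_R(M) \leq CI\mbox{-}dim_R(M)$ (from \cite{AGP}).

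The genuine gap is in your starting premise that all four of $CM\mbox{-}dim$, $G^*\mbox{-}dim$, $CI\mbox{-}dim$, $CI_*\mbox{-}dim$ are infima of $pd_Q(M\otimes_R R') - pd_Q(R')$ over nested classes of quasi-deformations; two of them are not. The paper's own definition of $CM\mbox{-}dim$ is an infimum of $G\mbox{-}dim_Q(M\otimes_R R') - G\mbox{-}dim_Q(R')$ over $G$-quasi-deformations, where the condition is that the kernel of $Q \to R'$ be $G$-perfect, not that $Q$ be Cohen--Macaulay; your argument for $CM\mbox{-}dim_R(M) \leq G\mbox{-}dim_R(M)$ via the trivial diagram $R = R' = Q$ does survive under the correct definition, so no harm results there. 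But Gerko's $CI_*\mbox{-}dim$ is not defined via quasi-deformations at all (it is a resolution-based dimension built from a restricted subclass of the totally reflexive modules), so neither ``$CI_*\mbox{-}dim \leq CI\mbox{-}dim$ by enlarging the class'' nor ``$G\mbox{-}dim \leq CI_*\mbox{-}dim$ by faithfully flat descent'' is an available argument: the first is precisely the nontrivial inequality the paper attributes to \cite{AGP}, and the second comes from the definition rather than from any descent. For $G\mbox{-}dim \leq G^*\mbox{-}dim$ your descent sketch is the right idea (this is Veliche's theorem), but as stated it is too quick: $M\otimes_R R'$ is totally reflexive over $R'$ only after passing to a sufficiently high syzygy, and the equality $G\mbox{-}dim_R(M) = G\mbox{-}dim_{R'}(M\otimes_R R')$ along a flat local map is itself a theorem that must be invoked, not a formal consequence of faithful flatness.
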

\begin{proof} These inequalities follow from the definition of the respective dimensions, except for the lower bound for $G^*\mbox{-}dim_R(M)$ which is from \cite{Veliche}, and the upper bound for $CI_*\mbox{-}dim_R(M)$ which is from \cite{AGP}. 
\end{proof}

\subsection{Cohen-Macaulay dimension} For the sake of completion, we discuss the definition of Cohen-Macaulay dimension and refer the reader to Gerko \cite{Gerko} for details.

Recall that the $\it{grade}$ of an $R\mbox{-}$module $M$, denoted by $grade(M)$, is defined to be the smallest integer $i$ such that $Ext_{R} ^{i} (M,R) \neq 0$. It is easy to see that $grade(M) \leq G\mbox{-}dim_{R} (M).$ If $grade(M) = G\mbox{-}dim_{R} (M)$, then $M$ is said to be $G\mbox{-} \it{perfect}.$ We say an ideal $I$ is $G\mbox{-} \it{perfect}$ over $R$ if $R/I$ is a $G\mbox{-} \it{perfect}$ $R$-module.

\begin{defn}
	A $G\mbox{-}quasi\mbox{-}deformation$ of a ring $R$ is a diagram of local homomorphisms $R \rightarrow R' \leftarrow Q,$ where $R \rightarrow R'$ is a flat extension and $Q \rightarrow R'$ is a $G\mbox{-}deformation$, that is, a surjective homomorphism whose kernel is a $G\mbox{-}\it{perfect}$ ideal. 
\end{defn}

\begin{defn}
	The Cohen-Macaulay dimension of an $R\mbox{-}$module $M$ is defined as \begin{align*}
			CM\mbox{-}dim_{R} (M) =& inf \{G\mbox{-}dim_{Q} (M\otimes_{R} R') - G\mbox{-}dim_{Q} (R') :\\ &  R \rightarrow R' \leftarrow Q \ is\ a\ G\mbox{-}quasi\mbox{-}deformation\}.
		\end{align*} 
	
\end{defn}

Cohen-Macaulay local rings are characterized by the class of modules with finite Cohen-Macaulay dimensions. It is a formal repercussion of the more specific assertion below.
\begin{theorem}\cite[Theorem 3.9]{Gerko} \label{theorem_gerko} For any local ring $R$, the following statements are equivalent:
	\begin{enumerate}
			\item $R$ is Cohen-Macaulay.
			\item $CM\mbox{-}dim_{R} (M)  < \infty\ for\ all\ R\mbox{-}modules\ M.$
			\item $CM\mbox{-}dim_{R} (k) < \infty.$
		\end{enumerate}
	
\end{theorem}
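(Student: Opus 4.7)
The plan is to prove the three-way equivalence as the cycle $(1) \Rightarrow (2) \Rightarrow (3) \Rightarrow (1)$. The middle implication $(2) \Rightarrow (3)$ is immediate by specializing to $M = k$, so the substance lies in $(1) \Rightarrow (2)$ and the converse $(3) \Rightarrow (1)$.

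For $(1) \Rightarrow (2)$, my plan is to exhibit a single $G$-quasi-deformation that works uniformly for every $R$-module $M$. I would take $R' = \widehat{R}$, the $m$-adic completion, which is faithfully flat over $R$, and invoke Cohen's structure theorem to write $\widehat{R} = Q/I$ for a regular local ring $Q$. Because $R$ is Cohen-Macaulay, so is $\widehat{R}$; hence $I$ is a perfect ideal of $Q$, and since projective dimension and $G$-dimension agree when the former is finite, $I$ is in fact $G$-perfect and $Q \twoheadrightarrow \widehat{R}$ is a $G$-deformation. Thus $R \to \widehat{R} \leftarrow Q$ is a $G$-quasi-deformation. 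Since $Q$ is regular, every finitely generated $Q$-module has finite projective dimension and hence finite $G$-dimension; applied to $M \otimes_R \widehat{R}$ and to $\widehat{R}$ itself, this yields $CM\mbox{-}dim_R(M) \le G\mbox{-}dim_Q(M \otimes_R \widehat{R}) - G\mbox{-}dim_Q(\widehat{R}) < \infty$ for every $M$.

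For the harder direction $(3) \Rightarrow (1)$, I would start from a $G$-quasi-deformation $R \to R' \leftarrow Q$ realizing $G\mbox{-}dim_Q(k \otimes_R R') < \infty$; writing $F = k \otimes_R R' = R'/mR'$, both $F$ and $R'$ have finite $G$-dimension as $Q$-modules. Applying Theorem \ref{theorem_H_dim}(a) to $H = G$ gives the Auslander--Bridger identities $\mathrm{depth}\, F = \mathrm{depth}\, Q - G\mbox{-}dim_Q(F)$ and $\mathrm{depth}\, R' = \mathrm{depth}\, Q - G\mbox{-}dim_Q(R')$. Flatness of $R \to R'$ contributes $\mathrm{depth}\, R' = \mathrm{depth}\, R + \mathrm{depth}\, F$ and, for Krull dimensions, $\dim R' = \dim R + \dim F$. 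The plan is then to feed these relations into Theorem \ref{theorem_H_dim}(a) applied to $H = CM$ (which already yields $CM\mbox{-}dim_R(k) = \mathrm{depth}\, R$), and to show that the closed fiber $F$ is a Cohen-Macaulay local ring, so that $\dim F = \mathrm{depth}\, F$. Combined with the flat-base-change identities this forces $\dim R = \mathrm{depth}\, R$, i.e., $R$ is Cohen-Macaulay.

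The main obstacle I anticipate is establishing that the closed fiber $F = R'/mR'$ is Cohen-Macaulay. Finiteness of $G\mbox{-}dim_Q(F)$ is a condition about $F$ viewed as a $Q$-module, and upgrading it to a genuine Cohen-Macaulay property of $F$ requires combining the Auslander--Bridger equality over $Q$ with the change-of-rings behavior recorded in Theorem \ref{theorem_H_dim}(c), and, where needed, refining the $G$-quasi-deformation (for instance, by killing a maximal regular sequence on $Q$ lying in $I$) so that the three rings $R$, $R'$, $Q$ align well enough to transport depth information between them. This is precisely the step where Gerko's original argument does the essential technical work, and any self-contained proof would have to replicate that passage.
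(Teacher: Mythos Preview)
The paper does not prove this theorem at all: it is quoted verbatim as \cite[Theorem 3.9]{Gerko} and used only as an input in the last paragraph (the proof of Theorem~\ref{corollary_main}). There is thus no in-paper argument to compare your proposal against.

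As a standalone sketch, your $(1)\Rightarrow(2)$ via the Cohen presentation $\widehat R\cong Q/I$ with $Q$ regular is correct and is essentially the standard argument. Your $(3)\Rightarrow(1)$ outline assembles the right ingredients (Auslander--Bridger over $Q$, the flat base-change identities for depth and Krull dimension, and Theorem~\ref{theorem_H_dim}\ref{thm_AB} giving $CM\text{-}dim_R(k)=\mathrm{depth}\,R$), but it is not a proof: as you yourself flag, the step ``$F=R'/mR'$ is Cohen--Macaulay'' is exactly the crux, and finiteness of $G\text{-}\dim_Q(F)$ alone does not yield $\dim F=\mathrm{depth}\,F$. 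Gerko's argument closes this gap by first modifying the $G$-quasi-deformation (localizing $R'$ so that the closed fibre becomes artinian) before running the depth/dimension bookkeeping; without some reduction of that kind your chain of equalities does not close.
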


\section{Proof of the main result}

Consider the following statements:
\vspace{2mm}

{\bf  P: } Let $R$ be a local ring of depth $1.$ If $(M,N)$ is any pair of non-zero Tor-independent $R$-modules with finite CM dimension, then $depth\, M + depth\, N = depth\, R + depth\, M \otimes_R N$ i.e. the pair $(M,N)$ satisfies the depth formula.

{\bf  Q: } Let $R$ be a local ring with depth $\geq 1.$ If $(M,N)$ is any pair of non-zero Tor-independent $R$-modules with finite CM dimension, then $depth\, M + depth\, N = depth\, R + depth\, M \otimes_R N$ i.e. the pair $(M,N)$ satisfies the depth formula.

To prove Theorem \ref{theorem_main} it suffices to show that $\textbf{P} \implies \textbf{Q}.$  For simplicity, we will divide {\bf P} into 3 simpler statements about local rings of depth $1$:
\begin{itemize}
	\item {\bf P0:} There exists no pair $(M,N)$ of Tor-independent $R$-modules with finite CM dimensions such that $depth\, M = depth\, N = 0.$ 
	\item {\bf P1:} If $(M,N)$ is a pair of Tor-independent $R$-modules with finite CM dimension $0,$ then $depth\, M \otimes_R N = 1.$  
	\item {\bf P2:} If $(M,N)$ is a pair of Tor-independent $R$-modules where $CM\mbox{-}dim(M) = 0,$ and $CM\mbox{-}dim(N) = 1,$ then $depth\, M \otimes_R N = 0.$ 
\end{itemize}

A simple argument (see Remark \ref{remark_1}) shows that {\bf P0}  $\implies$ {\bf P2}. Thus, we are only assuming {\bf P0} and {\bf P1}.

The proof of  {\bf P $\implies$ Q} is inductive and as an intermediate step, we will prove the following two special cases.
\begin{itemize}
	\item {\bf P1 $\implies $Q1} where {\bf Q1} is the statement: Let $R$ be a local ring with positive depth and $(M,N)$ be a pair of Tor-independent $R$-modules with $CM$ dimensions $0.$ Then $$depth\, R  = depth\, M \otimes_R N.$$ 
	\item  {\bf P1 + P2 $\implies $ Q2} where {\bf Q2} is the statement:  Let $R$ be a local ring with positive depth and $(M,N)$ be a pair of non-zero Tor-independent $R$-modules with $CM\mbox{-}dim(M) = 0,$ and $CM\mbox{-}dim(N) < \infty.$ Then $$depth\, N = depth\, M \otimes_R N.$$

	%
	%
	%
\end{itemize}

\begin{lemma} \label{lemma_depth}
	Let $M$ be an $R$-module with positive depth. Let $N$ be any ${R}$-module, such that $Tor_1^R(M,N) = 0.$ Then $depth\, M\otimes N > 0$ if and only if $ Tor_1^R(\overline{M}, N)  = 0$ for some $x$ which is an $M$-regular element.  
\end{lemma}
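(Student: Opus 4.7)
The plan is to convert the lemma into a statement about annihilators of $x$ on $M \otimes_R N$ by exploiting the short exact sequence $0 \to M \xrightarrow{x} M \to \overline{M}_x \to 0$ for an $M$-regular element $x \in m$. Tensoring this with $N$ and using the hypothesis $Tor_1^R(M,N) = 0$ yields the exact sequence
\begin{equation*}
0 \to Tor_1^R(\overline{M}_x, N) \to M \otimes_R N \xrightarrow{x} M \otimes_R N,
\end{equation*}
so that $Tor_1^R(\overline{M}_x, N) \cong (0 :_{M \otimes_R N} x)$. Thus the vanishing of $Tor_1^R(\overline{M}_x, N)$ is equivalent to $x$ being a non-zero-divisor on $M \otimes_R N$. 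The entire proof reduces to finding, or recognizing, such an $x$.

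For the forward direction, assume $depth\, M \otimes_R N > 0$. Since also $depth\, M > 0$, neither $Ass(M)$ nor $Ass(M \otimes_R N)$ contains the maximal ideal $m$. The union of these finitely many associated primes is therefore properly contained in $m$ by prime avoidance, so I may choose $x \in m$ that lies outside every prime in $Ass(M) \cup Ass(M \otimes_R N)$. Such an $x$ is simultaneously $M$-regular and $M \otimes_R N$-regular, and the identification above then forces $Tor_1^R(\overline{M}_x, N) = 0$.

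For the converse, suppose $x$ is $M$-regular and $Tor_1^R(\overline{M}_x, N) = 0$. The identification gives $(0 :_{M \otimes_R N} x) = 0$, i.e.\ $x$ is a non-zero-divisor on $M \otimes_R N$. Since $x \in m$, this immediately yields $depth\, M \otimes_R N \geq 1$, provided $M \otimes_R N \neq 0$; but this is automatic by Nakayama's lemma as soon as $N \neq 0$ (the case $N = 0$ being trivial under any reasonable depth convention). The only non-routine step is the forward direction's prime-avoidance argument, which is standard, so I anticipate no real obstacle.
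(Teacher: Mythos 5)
Your proposal is correct and follows essentially the same route as the paper: both tensor the sequence $0 \to M \xrightarrow{x} M \to \overline{M} \to 0$ with $N$, use $Tor_1^R(M,N)=0$ to identify $Tor_1^R(\overline{M},N)$ with the kernel of multiplication by $x$ on $M\otimes_R N$, and read off the equivalence. You merely make explicit two points the paper leaves implicit (the prime-avoidance choice of $x$ regular on both $M$ and $M\otimes_R N$, and the need for $M\otimes_R N\neq 0$ in the converse), which is fine.
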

\begin{proof} If $depth\, M\otimes N > 0$, we can find $x \in m$ regular on $M$ and $M \otimes_R N.$ Tensoring the sequence $0 \rar M \xrightarrow{x} M \rar \overline{M} \rar 0$ with $N$ gives $$0 \rar  Tor_1^R(\overline{M}, N) \rar M \otimes N \xrightarrow{x} M \otimes N.$$ This proves one side.  Conversely, if $ Tor_1^R(\overline{M}, N)  = 0$ then the above sequence shows that $x$ is $M \otimes_R N$-regular .
\end{proof}

\begin{remark} \label{remark_1}
	We show that  {\bf P0}  $\implies$ {\bf P2}. Let $(M,N)$ be a pair of Tor-independent $R$-modules over a depth $1$ local ring $R$, such that $CM\mbox{-}dim(M) =0$ and $CM\mbox{-}dim(N) =1.$ If $depth\, M \otimes N > 0,$ then by Lemma \ref{lemma_depth}, there is some $x$  which is $M$-regular such that $Tor_1^R(\overline{M}, N) = 0.$ Thus $(\overline{M},N)$ are Tor-independent $R$-modules of depth $0$ and have finite CM dimension (by Theorem \ref{theorem_H_dim}), contradicting {\bf P0}.
\end{remark}

\begin{lemma} \label{lemma_spect}
	Let $(R,m,k)$ be a local ring with positive depth, and let $M$ and $N$ be non-zero $R$-modules.
	\begin{enumerate}
		\item If there exists an element  $x \in m$  which is regular on $R$ and $N$, then we have isomorphisms   $$Tor_p^{\overline{R}}(\overline{M}, \overline{N}) \cong Tor_p^R(\overline{M}, N),\,\,\,\, p \geq 1.$$
		\item If there exists an element  $x \in m$  which is regular on $R,$ then there exists a long exact sequence \begin{align*}  \cdots \rar Tor^R_{p+1}(N, \overline{M}) \rar  Tor_{p+1}^{\overline{R}}(\overline{N}, \overline{M}) \rar Tor_{p-1}^{\overline{R}}(Tor_1^R(N,\overline{R}), \overline{M}) \\ \rar Tor^R_{p}(N, \overline{M}) \rar  Tor_{p}^{\overline{R}}(\overline{N}, \overline{M}) \rar \cdots.
			\end{align*}
	\end{enumerate}
\end{lemma}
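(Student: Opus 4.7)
My strategy is to deduce both parts from a single change-of-rings (Cartan--Eilenberg) spectral sequence. For any $\overline{R}$-module $A$ and any $R$-module $B$, the derived-category identity $A \otimes_R^L B \simeq A \otimes_{\overline{R}}^L (\overline{R} \otimes_R^L B)$ gives a convergent first-quadrant homological spectral sequence
\[
E^2_{p,q} = Tor_p^{\overline{R}}\!\bigl(A,\, Tor_q^R(\overline{R}, B)\bigr) \Longrightarrow Tor_{p+q}^R(A, B).
\]
I will apply this with $A = \overline{M}$, which is naturally an $\overline{R}$-module since $x \overline{M} = 0$, and $B = N$, making free use of the symmetry $Tor^R(\overline{M}, N) \cong Tor^R(N, \overline{M})$.

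Since $x$ is $R$-regular, the complex $0 \rar R \xrightarrow{x} R \rar 0$ is a free $R$-resolution of $\overline{R}$ of length one. Tensoring with $N$ immediately gives $Tor_q^R(\overline{R}, N) = 0$ for all $q \geq 2$, together with $Tor_0^R(\overline{R}, N) = \overline{N}$ and $Tor_1^R(\overline{R}, N) = (0 :_N x)$. Thus the $E^2$-page is supported in the two rows $q = 0, 1$.

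For part (1), the extra hypothesis that $x$ is $N$-regular makes the sequence $0 \rar N \xrightarrow{x} N \rar \overline{N} \rar 0$ exact, so $(0 :_N x) = 0$. Only the $q = 0$ row survives, the spectral sequence degenerates, and the induced edge maps yield the desired isomorphism $Tor_p^{\overline{R}}(\overline{M}, \overline{N}) \cong Tor_p^R(\overline{M}, N)$ for every $p \geq 1$.

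For part (2), the two-row structure of $E^2$ forces the spectral sequence to collapse into a long exact sequence in the standard way: the $d_2$-differential $E^2_{p,0} \rar E^2_{p-2,1}$ becomes the connecting homomorphism, and the remaining maps are the edge/filtration maps relating the $E^{\infty}$-quotients to the abutment $Tor^R_{p+q}(N, \overline{M})$. Substituting $E^2_{p,0} = Tor_p^{\overline{R}}(\overline{N},\overline{M})$ and $E^2_{p-2,1} = Tor_{p-2}^{\overline{R}}(Tor_1^R(N,\overline{R}),\overline{M})$ and reindexing reproduces precisely the long exact sequence in the statement. I expect the only real point of care to be an honest identification of the $d_2$-map with an $\overline{R}$-linear morphism between the stated $Tor$ groups (and the attendant index bookkeeping); once the change-of-rings spectral sequence is in hand, this is routine.
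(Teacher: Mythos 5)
Your proposal is correct and follows essentially the same route as the paper: both rest on the change-of-rings spectral sequence $Tor_p^{\overline{R}}(Tor_q^R(\overline{R},N),\overline{M}) \Rightarrow Tor_{p+q}^R(N,\overline{M})$, with part (1) coming from the collapse to the $q=0$ row when $x$ is $N$-regular and part (2) from the standard two-row long exact sequence when only $q=0,1$ survive. Your explicit computation of $Tor_1^R(\overline{R},N)=(0:_N x)$ via the resolution $0\rar R\xrightarrow{x}R\rar 0$ is a nice touch but the argument is the same.
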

\begin{proof} There is a spectral sequence \cite[Theorem 5.6.6]{W} $$Tor_p^{\overline{R}}(Tor_q^{R}(N, \overline{R}), \overline{M}) \implies Tor^R_{p+q}(N, \overline{M}).$$
	
	\begin{enumerate}
		\item It follows by our assumptions that $Tor_q^R(N, \overline{R}) = 0$ for $q \geq 1.$ Therefore the spectral sequence collapses at $r=2$ to a single row and we get the claimed isomorphisms.
		\item It follows by our assumptions that $Tor_q^R(N, \overline{R}) = 0$ for $q \geq 2.$ Therefore the spectral sequence collapses at $r=2$ to the rows $q=0,1$ and we get the claimed long exact sequence.
	\end{enumerate} 
\end{proof}

\begin{corollary} \label{cor_spect} 
		Let $(R,m,k)$ be a local ring with positive depth and let $M$ and $N$ be non-zero Tor-independent $R$-modules. 
			\begin{enumerate}
				
			\item If there exists an element  $x \in m$  which is regular on $R,M$ and $N,$ then   \begin{align*}  Tor_p^{\overline{R}}(\overline{M}, \overline{N}) &= 0, \,\, p \geq 2, \\ Tor_1^{\overline{R}}(\overline{M}, \overline{N}) &\cong Tor_1^R(\overline{M}, N).
			\end{align*}
			\item If there exists an element  $x \in m$  which is regular on $R$ and $M,$ then there is an exact sequence \begin{align*} 0 \rar Tor_2^{\overline{R}}(\overline{N}, \overline{M}) \rar Tor_1^R(N,\overline{R}) \otimes \overline{M} \rar Tor_1^R(N,\overline{M}) \rar Tor_1^{\overline{R}}(\overline{N}, \overline{M}) \rar 0
			\end{align*} and isomorphisms $$Tor_{p}^{\overline{R}}(\overline{N}, \overline{M}) \cong Tor_{p-2}^{\overline{R}}(Tor_1^R(N,\overline{R}), \overline{M}),\ \  p \geq 3.$$
		\end{enumerate}
\end{corollary}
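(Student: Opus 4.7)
The plan is to derive both parts of the corollary by feeding the Tor-independence hypothesis into Lemma \ref{lemma_spect}, using it to collapse the conclusions there. In each case, the Tor-vanishing that appears in the corollary is traced back, via a short exact sequence of the form $0 \rar M \xrightarrow{x} M \rar \overline{M} \rar 0$, to the standing hypothesis $Tor^R_i(M,N) = 0$ for $i \geq 1$.

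For part (1), I would apply Lemma \ref{lemma_spect}(1), whose hypothesis that $x$ is regular on $R$ and $N$ is part of the stated assumption, to obtain the natural isomorphism $Tor_p^{\overline{R}}(\overline{M}, \overline{N}) \cong Tor_p^R(\overline{M}, N)$ for $p \geq 1$. It then remains to compute $Tor_p^R(\overline{M}, N)$. Since $x$ is $M$-regular, the short exact sequence $0 \rar M \xrightarrow{x} M \rar \overline{M} \rar 0$ is exact; tensoring with $N$ and invoking the long exact sequence of $Tor^R(-,N)$ together with Tor-independence immediately yields $Tor^R_p(\overline{M}, N) = 0$ for $p \geq 2$. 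Combined with the isomorphism from Lemma \ref{lemma_spect}(1), this gives both claims of part (1).

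For part (2), I would apply Lemma \ref{lemma_spect}(2)---whose sole hypothesis, that $x$ is $R$-regular, is available---and again exploit the short exact sequence $0 \rar M \xrightarrow{x} M \rar \overline{M} \rar 0$ (exact because $x$ is $M$-regular) tensored with $N$ to deduce $Tor^R_p(N,\overline{M}) = 0$ for $p \geq 2$. Plugging these vanishings into the long exact sequence of Lemma \ref{lemma_spect}(2) and inspecting the bottom range, from $Tor^R_2(N,\overline{M}) = 0$ down to $Tor_{-1}^{\overline{R}}(Tor_1^R(N,\overline{R}), \overline{M}) = 0$, and using $Tor_0^{\overline{R}}(Tor_1^R(N, \overline{R}), \overline{M}) = Tor_1^R(N, \overline{R}) \otimes_{\overline{R}} \overline{M}$, produces precisely the four-term exact sequence stated in the corollary. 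For $p \geq 3$ both flanking terms $Tor^R_p(N,\overline{M})$ and $Tor^R_{p-1}(N,\overline{M})$ vanish, so the corresponding connecting map is forced to be the asserted isomorphism $Tor_p^{\overline{R}}(\overline{N},\overline{M}) \cong Tor_{p-2}^{\overline{R}}(Tor_1^R(N,\overline{R}), \overline{M})$.

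The argument is essentially bookkeeping once Lemma \ref{lemma_spect} is in hand; the only point requiring attention is correctly indexing the segments of the long exact sequence in part (2), where one must confirm that the two terms immediately outside the displayed four-term piece are precisely the vanishings identified above. I do not anticipate a genuine obstacle beyond this careful index-tracking.
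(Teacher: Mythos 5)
Your argument is correct and is essentially the paper's proof: the paper likewise deduces $Tor_p^R(\overline{M},N)=0$ for $p\geq 2$ from Tor-independence via the sequence $0 \rar M \xrightarrow{x} M \rar \overline{M} \rar 0$ and then feeds this into the two parts of Lemma \ref{lemma_spect}. Your index-tracking in part (2), including the identification of the four-term segment at $p=1$ and the isomorphisms for $p\geq 3$, matches what the paper leaves implicit.
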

\begin{proof}
	Both claims follow from the vanishing of $Tor_p^R(\overline{M}, N)$ for $p \geq 2.$ 
\end{proof}

\begin{lemma}\label{lemma_tor_vanishing}
Let $M$ and $N$ be Tor-independent $R\mbox{-}$modules and let $x \in m$ be an element regular on $R, M, N$ and  $M\otimes_{R} N.$ Then $\overline{M}$ and $\overline{N}$ are Tor-independent over $\overline{R}.$ 
\end{lemma}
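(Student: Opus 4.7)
The plan is to show directly that $\mathrm{Tor}_p^{\overline{R}}(\overline{M}, \overline{N}) = 0$ for every $p \geq 1$, using the short exact sequence induced by multiplication by $x$ together with the change-of-rings machinery packaged in Corollary \ref{cor_spect}.

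First I would reduce everything to the single vanishing statement $\mathrm{Tor}_1^R(\overline{M}, N) = 0$. Indeed, since $x$ is regular on $R$, $M$, and $N$, Corollary \ref{cor_spect}(1) immediately gives $\mathrm{Tor}_p^{\overline{R}}(\overline{M}, \overline{N}) = 0$ for $p \geq 2$ together with the isomorphism $\mathrm{Tor}_1^{\overline{R}}(\overline{M}, \overline{N}) \cong \mathrm{Tor}_1^R(\overline{M}, N)$. So once we know $\mathrm{Tor}_1^R(\overline{M}, N) = 0$, Tor-independence of $\overline{M}$ and $\overline{N}$ over $\overline{R}$ is settled.

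For that remaining vanishing, I would tensor the short exact sequence
\begin{equation*}
0 \longrightarrow M \xrightarrow{\,x\,} M \longrightarrow \overline{M} \longrightarrow 0
\end{equation*}
with $N$. Since $\mathrm{Tor}_1^R(M,N) = 0$ by Tor-independence, the resulting long exact sequence truncates to
\begin{equation*}
0 \longrightarrow \mathrm{Tor}_1^R(\overline{M}, N) \longrightarrow M \otimes_R N \xrightarrow{\,x\,} M \otimes_R N.
\end{equation*}
But by hypothesis $x$ is regular on $M \otimes_R N$, so the multiplication map on the right is injective; hence $\mathrm{Tor}_1^R(\overline{M}, N) = 0$, as required.

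There is no real obstacle here: all the heavy lifting has already been done in the spectral sequence computation of Corollary \ref{cor_spect}. The only subtlety to keep in mind is that the hypothesis on $x$ is used in two distinct ways — regularity on $R$, $M$, $N$ feeds into the spectral sequence argument (through the vanishing of $\mathrm{Tor}_q^R(N, \overline{R})$ for $q \geq 1$), while regularity on $M \otimes_R N$ is what forces the key boundary $\mathrm{Tor}_1^R(\overline{M}, N)$ to vanish. Both are genuinely needed.
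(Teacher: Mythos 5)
Your proposal is correct and follows essentially the same route as the paper: the paper also tensors $0 \to M \xrightarrow{x} M \to \overline{M} \to 0$ with $N$ (via Lemma \ref{lemma_depth}) to get $\mathrm{Tor}_1^R(\overline{M},N)=0$ from regularity of $x$ on $M\otimes_R N$, and then invokes Corollary \ref{cor_spect} exactly as you do. The only cosmetic difference is that you isolate the single vanishing $\mathrm{Tor}_1^R(\overline{M},N)=0$ as the key point rather than citing Lemma \ref{lemma_depth}.
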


\begin{proof} By Lemma \ref{lemma_depth}, and tensoring $0 \rar M \xrightarrow{x} M \rar \overline{M} \rar 0$ with $N,$ we get that $\overline{M}$ and $N$ are Tor-independent over $R.$ Applying Corollary \ref{cor_spect} completes the proof.

\end{proof}

\begin{lemma} \label{lemma_depth_reduct}
	If there exists an element  $x \in m$  which is regular on $R$, $M$, $N$ and $M\otimes_{R} N.$ Then the following are equivalent
	\begin{enumerate}
		\item $M$ and $N$ are Tor-independent $R$-modules with $depth\, M = e$, $depth\, N = f$ and $depth\, M \otimes_R N = g.$
		\item $\overline{M}$ and $\overline{N}$ are Tor-independent $\overline{R}$-modules with $depth\, \overline{M} = e-1$, $depth\, \overline{N} = f-1$ , and $depth\, \overline{M} \otimes_{\overline{R}} \overline{N} = g-1.$
	\end{enumerate}
\end{lemma}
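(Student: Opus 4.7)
The plan is to prove the equivalence $(1) \iff (2)$ by handling the two directions separately, each as a combination of one earlier lemma with the standard depth-drop formula for a regular element. Along the way I would use the routine base-change identification $\overline{M \otimes_R N} \cong \overline{M} \otimes_{\overline{R}} \overline{N}$.

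For the forward direction $(1) \implies (2)$, Tor-independence of $\overline{M}$ and $\overline{N}$ over $\overline{R}$ will follow at once from Lemma \ref{lemma_tor_vanishing}, whose hypotheses are precisely those given. For the depth equalities I would simply apply the standard fact that, whenever $x$ is regular on a finitely generated $R$-module $W$, one has $depth\, W = depth_{\overline{R}}(\overline{W}) + 1$, to each of $W = M$, $W = N$, and $W = M \otimes_R N$.

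For the reverse direction $(2) \implies (1)$, the depths come out for free from the same depth-drop formula applied in reverse, since $x$ is assumed regular on $M$, $N$, and $M \otimes_R N$. The substantive step is lifting the Tor-vanishing from $\overline{R}$ back to $R$. First I would invoke Lemma \ref{lemma_spect}(1) in the orientation where $x$ is regular on $R$ and on $N$, obtaining isomorphisms
\[
Tor_p^R(\overline{M}, N) \;\cong\; Tor_p^{\overline{R}}(\overline{M}, \overline{N}) \qquad (p \geq 1).
\]
The right-hand side vanishes by the hypothesis of $(2)$, so $Tor_p^R(\overline{M}, N) = 0$ for all $p \geq 1$. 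Next I would tensor the short exact sequence $0 \to M \xrightarrow{x} M \to \overline{M} \to 0$ with $N$; the resulting long exact sequence, together with the vanishing just obtained, forces multiplication by $x$ to be an isomorphism on each $Tor_p^R(M, N)$ with $p \geq 1$. A Nakayama argument (the Tor modules are finitely generated and $x \in m$) then yields $Tor_p^R(M, N) = 0$, as desired.

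I do not anticipate any genuine obstacle, since the lemma is essentially a repackaging of previously established facts. The only step requiring a moment of care is the Nakayama argument in the reverse implication, which depends on $x$ lying in the maximal ideal and on the finite generation of the Tor modules; both are built into the standing hypotheses of the paper.
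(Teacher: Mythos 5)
Your proposal is correct and follows essentially the same route as the paper: Lemma \ref{lemma_tor_vanishing} for the forward direction, and for the converse the isomorphisms of Lemma \ref{lemma_spect}(1) followed by the long exact sequence from $0 \to M \xrightarrow{x} M \to \overline{M} \to 0$ and Nakayama. The paper's proof is just a terser version of the same argument.
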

\begin{proof}
	$1 \implies 2$ is by Lemma \ref{lemma_tor_vanishing}. For the converse implication, the claim about depth is clear. By Lemma \ref{lemma_spect}, $Tor_p^R(\overline{M}, N) = 0$ for all $p \geq 1.$ Thus by Nakayama lemma, $M$ and $N$ are Tor-independent. 
\end{proof}

For notational convenience, we have used the subscript $d$ (for instance $M_d$ or $N_d$) to denote modules over a ring $R_d$ of depth $d,$ in the proof of Lemma \ref{lemma_depth_M_N_positive} and the proof of {\bf P1 $\implies $Q1}. 
\begin{lemma} \label{lemma_depth_M_N_positive}
	Let $R_d$ be a local ring with depth $d \geq 1.$ Let $(M_d,N_d)$ be a pair of Tor-independent $R_d$-modules with $CM\mbox{-}dim(M_d) = CM\mbox{-}dim(N_d) = 0.$ Then $${\bf P1} \implies depth\, M_d \otimes_{R_d} N_d > 0.$$ 
\end{lemma}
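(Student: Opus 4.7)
My plan is to proceed by induction on $d \geq 1$; the base case $d = 1$ is exactly \textbf{P1}. For the inductive step, assume the lemma at depth $d - 1$, and let $R_d, M_d, N_d$ be as in the statement with $d \geq 2$. Since $CM\mbox{-}dim(M_d) = CM\mbox{-}dim(N_d) = 0$, Theorem \ref{theorem_H_dim}(a) forces $depth\, M_d = depth\, N_d = d$, so by prime avoidance I can pick $x \in m$ regular on $R_d$, $M_d$, and $N_d$ simultaneously. Passing to $\overline{R} := R_d / xR_d$ of depth $d - 1 \geq 1$, Theorem \ref{theorem_H_dim}(c) preserves CM dimension $0$ for $\overline{M_d}, \overline{N_d}$, while Corollary \ref{cor_spect}(1) gives both $Tor_p^{\overline{R}}(\overline{M_d}, \overline{N_d}) = 0$ for $p \geq 2$ and the isomorphism $Tor_1^{\overline{R}}(\overline{M_d}, \overline{N_d}) \cong Tor_1^R(\overline{M_d}, N_d)$.

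The central idea is to replace $\overline{N_d}$ with its first $\overline{R}$-syzygy $\overline{N_d}' := \Omega_1^{\overline{R}}(\overline{N_d})$. A standard dimension shift from the higher-Tor vanishing yields $Tor_p^{\overline{R}}(\overline{M_d}, \overline{N_d}') = 0$ for every $p \geq 1$, so $(\overline{M_d}, \overline{N_d}')$ is Tor-independent over $\overline{R}$, and Theorem \ref{theorem_H_dim}(b) also keeps $CM\mbox{-}dim_{\overline{R}}(\overline{N_d}') = 0$. This pair thus meets the hypotheses of the lemma at depth $d - 1$, and the inductive hypothesis supplies $depth_{\overline{R}}(\overline{M_d} \otimes_{\overline{R}} \overline{N_d}') > 0$. (In the degenerate case $\overline{N_d}' = 0$, i.e.\ $\overline{N_d}$ is $\overline{R}$-free, the module $Tor_1^{\overline{R}}(\overline{M_d}, \overline{N_d})$ already vanishes and Lemma \ref{lemma_depth} finishes directly.)

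To conclude, I argue by contradiction: suppose $depth\, M_d \otimes_{R_d} N_d = 0$. Then $m$ is an associated prime of $M_d \otimes N_d$, so some non-zero $w$ is annihilated by $m$; since $x \in m$ we get $w \in (0 :_{M_d \otimes N_d} x) = Tor_1^R(\overline{M_d}, N_d) \cong Tor_1^{\overline{R}}(\overline{M_d}, \overline{N_d})$, making $m$ an associated prime of this $Tor$-module. Tensoring the syzygy sequence $0 \to \overline{N_d}' \to \overline{R}^r \to \overline{N_d} \to 0$ with $\overline{M_d}$ over $\overline{R}$ produces an injection $Tor_1^{\overline{R}}(\overline{M_d}, \overline{N_d}) \hookrightarrow \overline{M_d} \otimes_{\overline{R}} \overline{N_d}'$, so $m$ is associated to $\overline{M_d} \otimes_{\overline{R}} \overline{N_d}'$ as well, forcing its depth to be $0$ (depth over $R_d$ and over $\overline{R}$ agree for any $\overline{R}$-module by a change-of-rings computation). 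This contradicts the inductive conclusion. The main obstacle is precisely the syzygy replacement trick: a direct reduction would require $\overline{M_d}, \overline{N_d}$ themselves to be Tor-independent over $\overline{R}$, which by the isomorphism above is equivalent to the very conclusion we seek.
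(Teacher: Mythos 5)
Your proof is correct, but it takes a genuinely different route from the paper's. The paper keeps the syzygy on the $R_d$-side and descends all the way to depth $1$ in one sweep: it replaces $M_d$ by $\Omega_{d-1}(M_d)$, observes that $\operatorname{depth} \Omega_{d-1}(M_d)\otimes_{R_d} N_d \geq d-1 \geq 1$ (so Lemma \ref{lemma_depth_reduct} applies at every stage), cuts by regular elements down to a depth $1$ ring where {\bf P1} gives $\operatorname{depth}\Omega_{d-1}(M_1)\otimes_{R_1}N_1 = 1$, climbs back up to get $\operatorname{depth}\Omega_{d-1}(M_d)\otimes_{R_d}N_d = d$, and then recovers $\operatorname{depth} M_d\otimes N_d > 0$ from the tensored resolution via the depth lemma. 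You instead induct one step at a time on $d$, take the syzygy \emph{after} reduction and \emph{of the other module} ($\Omega_1^{\overline{R}}(\overline{N_d})$), use the dimension shift on Corollary \ref{cor_spect}(1) to restore Tor-independence over $\overline{R}$, and close with an associated-primes contradiction through the chain $\operatorname{soc}$-element $\in Tor_1^R(\overline{M_d},N_d)\cong Tor_1^{\overline{R}}(\overline{M_d},\overline{N_d})\hookrightarrow \overline{M_d}\otimes_{\overline{R}}\Omega_1^{\overline{R}}(\overline{N_d})$. Both arguments circumvent the same obstruction (reduction mod $x$ preserves Tor-independence only when $\operatorname{depth} M\otimes N>0$, which is the thing being proved) by inserting a syzygy; the paper's version buys a direct, non-contradiction computation of the depth of $\Omega_{d-1}(M_d)\otimes N_d$, while yours is more local (only one reduction step at a time) and makes the role of $Tor_1^{\overline{R}}(\overline{M},\overline{N})$ as the precise obstruction explicit. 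Your handling of the degenerate case $\Omega_1^{\overline{R}}(\overline{N_d})=0$ and the identification of the socle with $(0:_{M\otimes N}x)=Tor_1^R(\overline{M_d},N_d)$ are both sound, so I see no gap.
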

\begin{proof} By {\bf P1}, we may assume $d \geq 2.$ Let $\Omega_i(M_d)$ be the $i$'th syzygy of $M_d$ defined by a minimal free resolution $$  \cdots \rar P_{i,d} \rar \cdots \rar P_{1,d} \rar P_{0,d} \rar M_d \rar 0.$$ It is clear that $depth\, \Omega_i(M_d) \otimes_{R_d} N_d \geq i$ for $i \in \{1,2,\cdots, d\}.$ We work with the pair $(\Omega_{d-1}(M_d), N_d)$ of Tor-independent $R_d$-modules with (see Theorem \ref{theorem_H_dim}) $$CM\mbox{-}dim(\Omega_{d-1}(M_d)) = CM\mbox{-}dim(N_d) = 0$$ and $depth\, \Omega_{d-1}(M_d) \otimes_{R_d} N_d \geq d-1 \geq 1.$ Let $x_d \in m$  be a nonzero divisor on the finite collection of $R_d$ modules $\{R_d,M_d, N_d, \Omega_i(M_d), \{\Omega_i(M_d) \otimes_{R_d} N_d\}_{i \in \{1,2,\cdots, d-1\}}\}.$ For any $R$-module $T_d,$ define $T_{d-1} := T_d/x_dT_d.$ Then by Theorem \ref{theorem_H_dim}, the $R_{d-1}$-modules $M_{d-1}, N_{d-1},$ and $\{\Omega_i(M_{d-1})\}_{i \in \{1,2,\cdots, d-1\}}\}$ have Cohen-Macaulay dimension zero . By Lemma \ref{lemma_depth_reduct}, $$depth\, \Omega_{d-1}(M_{d-1})\otimes_{R_{d-1}} N_{d-1 }\geq d-2.$$ Continuing this way, by repeatedly applying Lemma \ref{lemma_depth_reduct} on successive pairs $(\Omega_{d-1}(M_i), N_i)$ for $i \in \{d-1,d-2,\cdots, 1\},$ we conclude that the pair $(\Omega_{d-1}(M_1), N_1)$ of $R_1$-modules is Tor-independent  with Cohen-Macaulay dimension zero, where $depth\, R_1 = 1.$
	
	By {\bf P1}, $depth\, \Omega_{d-1}(M_1) \otimes_{R_1} N_1= 1.$ Going upwards, by applying the converse implication of Lemma \ref{lemma_depth_reduct} on the pair $(\Omega_{d-1}(M_i), N_i),$ we conclude that $depth\, \Omega_{d-1}(M_d) \otimes_{R_d} N_d = d.$ The exact sequence $$0 \rar \Omega_{d-1}(M_d) \otimes_{R_d} N_d \rar P_{d-1,d} \otimes_{R_d} N_d \rar \cdots \rar P_{0,d} \otimes_{R_d} N_d \rar M_d \otimes_{R_d} N_d \rar 0,$$ shows that $depth\,  M_d \otimes_{R_d} N_d > 0.$

\end{proof}

\begin{proof}[Proof of {\bf P1 $\implies $Q1}] Without loss of generality, we may assume that $d = depth\, R_d \geq 2.$ We apply Lemma \ref{lemma_depth_M_N_positive} to conclude that $depth\, M_d \otimes_{R_d} N_d$ is positive. Thus by Lemma \ref{lemma_depth_reduct}, $M_{d-1}, N_{d-1}$ are Tor-independent $R_{d-1}$-modules with zero Cohen-Macaulay dimension. Applying Lemma \ref{lemma_depth_M_N_positive} on the pair of $R_{d-1}$-modules $(M_{d-1}, N_{d-1}),$ we get that $depth\, M_{d-1} \otimes_{R_{d-1}} N_{d-1}$ is positive. Continuing in this way, we conclude that $(M_1, N_1)$ is a pair of Tor-independent $R_1$-modules with zero Cohen-Macaulay dimension and with $depth\, M_1 \otimes_{R_1} N_1 = 1,$ where $depth\, R_1 = 1.$ Applying the converse implication of Lemma \ref{lemma_depth_reduct}, we work our way upwards, to conclude that $depth\, M_d \otimes_{R_d} N_d = d.$ 
\end{proof}

\begin{proof}[Proof of {\bf P1 + P2 $\implies $ Q2}] By assumption $CM\mbox{-}dim(M) = 0.$  We work by decreasing induction on the depth of $N.$ Since $CM\mbox{-}dim(N) < \infty$ therefore, by Theorem \ref{theorem_H_dim}, $depth\, N \leq depth\, R.$ If $depth\, N = depth\, R,$ then this holds by {\bf Q1} proved above, as then $M$ and $N$ will both be modules with zero Cohen-Macaulay dimension. 
Assume that $depth\, N < depth\, R.$ Let $\Omega_1(N)$ be the first syzygy of $N$ and $0 \rar \Omega_1(N) \rar Q_0 \rar N \rar 0$ be the associated sequence where $Q_0$ is a free $R$-module. Tensoring with $M$, we get $$0 \rar M \otimes \Omega_1(N) \rar M \otimes Q_0 \rar M \otimes N \rar 0.$$ Since $M$ and $\Omega_1(N)$ are Tor-independent and $depth\, \Omega_1(N) = depth\, N+1$, by inductive hypothesis, the pair $(M,\Omega_1(N))$ satisfies the depth formula. There are 2 possibilities
\begin{enumerate}[leftmargin=1cm]
	\item $depth\, M \otimes N < depth\, M \otimes Q_0 = depth\, R = d.$ Then we must have $depth\, M \otimes N = depth\, M \otimes \Omega_1(N) -1$ and we are done.  
	\item $depth\, M \otimes N \geq depth\, M \otimes Q_0 = d.$ Then, we must have $depth\, M \otimes \Omega_1(N) \geq d.$ 
	Therefore, by the inductive hypothesis, \begin{align*}
		d >&  depth\, N =  depth\, \Omega_1(N) -1 \\ =&  depth\, M \otimes \Omega_1(N) + depth\, R - depth\, M -1 \geq d-1.
	\end{align*} If $d \geq 2,$ by repeatedly applying Lemma \ref{lemma_depth_reduct}, we may assume $depth\, M = depth\, M \otimes N = depth\, R = 1$ and $depth\, N = 0.$ This contradicts the assumption {\bf P2}. 
\end{enumerate}
\end{proof}

\begin{lemma} \label{lemma_Gdim_inequality} Let $M$ and $N$ be $R$-modules such that $Tor_1^R(M,N) = 0.$ Then one of the following two holds 
	\begin{enumerate}
		\item $depth\, M \leq depth\,M \otimes N.$
		\item $depth\,M \otimes \Omega_1(N) = depth\,M \otimes N + 1.$
	\end{enumerate} 
\end{lemma}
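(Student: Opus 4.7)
The plan is to apply the classical depth lemma to the short exact sequence obtained by tensoring the first syzygy sequence of $N$ with $M$. Starting from $0 \to \Omega_1(N) \to Q_0 \to N \to 0$ extracted from (\ref{eqn_N}), the hypothesis $Tor_1^R(M,N) = 0$ makes the tensor product exact, yielding
$$0 \to M \otimes \Omega_1(N) \to M \otimes Q_0 \to M \otimes N \to 0.$$
Since $Q_0$ is free, $depth\, M \otimes Q_0 = depth\, M$, which is the single crucial observation that lets the middle term in this sequence be controlled by $depth\, M$.

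Next I would invoke the standard depth lemma: for a short exact sequence $0 \to A \to B \to C \to 0$ of finitely generated modules over a local ring, one has $depth\, A \geq \min\{depth\, B,\; depth\, C + 1\}$ and $depth\, C \geq \min\{depth\, A - 1,\; depth\, B\}$. Writing $a = depth\, M \otimes \Omega_1(N)$, $b = depth\, M$, and $c = depth\, M \otimes N$, these specialize to
$$a \geq \min\{b,\; c+1\} \qquad \text{and} \qquad c \geq \min\{a-1,\; b\}.$$

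The lemma then follows from a two-case dichotomy on the sign of $b - c$. If $b \leq c$, conclusion (1) is immediate. Otherwise $b > c$, i.e.\ $b \geq c+1$; the first inequality becomes $a \geq \min\{b, c+1\} = c+1$, while the second is incompatible with $c \geq b$, so it must deliver $c \geq a-1$, equivalently $a \leq c+1$. Combining gives $a = c+1$, which is conclusion (2). There is no substantive obstacle here — the argument is essentially routine bookkeeping with the depth lemma, and the only care needed is in observing that the case $b > c$ eliminates one branch of each $\min$, forcing both inequalities to squeeze $a$ to exactly $c+1$.
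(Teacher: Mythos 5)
Your proposal is correct and follows essentially the same route as the paper: tensor the syzygy sequence of $N$ with $M$ (exact by the $Tor_1$ vanishing), note $depth\, M \otimes Q_0 = depth\, M$, and apply the depth lemma to the resulting short exact sequence. The paper simply cites the depth lemma in the form "if the middle term has strictly larger depth than the quotient, the kernel's depth equals the quotient's depth plus one," whereas you derive that refinement from the two standard inequalities; the content is identical.
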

\begin{proof} Assume that  $depth\, M > depth\,M \otimes N.$ Tensoring the long exact sequence \eqref{eqn_N} with $M,$ and the Tor vanishing gives  the short exact sequence $0 \rar M \otimes \Omega_1(N) \rar M \otimes Q_0 \rar M \otimes N \rar 0.$ By depth lemma, $depth\, M \otimes \Omega_1(N) = depth\, M \otimes N + 1$ which completes the proof. 
\end{proof}

\begin{theorem} \label{theorem_main_inequality} Let $R$ be a local ring of positive depth. Let $M$ and $N$ be Tor-independent $R$-modules such that $CM\mbox{-}dim_R \, M$ and $CM\mbox{-}dim_R \, N$ are finite. Assume that the statement ${\bf P}$ holds. Then $$depth\, R + depth\, M \otimes N \geq depth \, M + depth\, N.$$ 
\end{theorem}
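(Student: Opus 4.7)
The plan is to proceed by induction on $CM\mbox{-}dim_R(M)$, using the statement {\bf Q2} already established (from {\bf P}) as the base case and a one-step syzygy reduction for the inductive step. We may assume $M$ and $N$ are both non-zero, and recall from Theorem \ref{theorem_H_dim}(b) that $CM\mbox{-}dim_R(\Omega_1(M))$ remains finite whenever $CM\mbox{-}dim_R(M)$ is, in fact equaling $\max\{CM\mbox{-}dim_R(M)-1,\,0\}$.

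For the base case $CM\mbox{-}dim_R(M) = 0$, Theorem \ref{theorem_H_dim}(a) gives $depth\, M = depth\, R$, and {\bf Q2} yields $depth\, N = depth\, M\otimes_R N$. Adding these two equalities produces the full depth formula, which of course implies the desired inequality.

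For the inductive step, suppose $CM\mbox{-}dim_R(M) = c \geq 1$ and assume the theorem for all Tor-independent pairs $(M', N)$ (satisfying the stated finiteness hypotheses) with $CM\mbox{-}dim_R(M') < c$. Then $\Omega_1(M)$ is non-zero with $CM\mbox{-}dim_R(\Omega_1(M)) = c-1$ and $depth\,\Omega_1(M) = depth\, M + 1$ by Theorem \ref{theorem_H_dim}, and the pair $(\Omega_1(M), N)$ is Tor-independent because $Tor_i^R(\Omega_1(M),N) \cong Tor_{i+1}^R(M,N) = 0$ for every $i\geq 1$. Tensoring the short exact sequence $0 \to \Omega_1(M) \to P_0 \to M \to 0$ with $N$ preserves exactness (by Tor-independence), and since $P_0$ is free, $depth\, P_0\otimes_R N = depth\, N$; the depth lemma therefore gives
\begin{align*}
	depth\, M\otimes_R N \;\geq\; \min\bigl(depth\, \Omega_1(M)\otimes_R N - 1,\; depth\, N\bigr).
\end{align*}

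Applying the inductive hypothesis to $(\Omega_1(M), N)$ yields $depth\,\Omega_1(M)\otimes_R N \geq depth\,\Omega_1(M) + depth\, N - depth\, R = depth\, M + depth\, N - depth\, R + 1$, so the first argument inside the minimum is bounded below by $depth\, M + depth\, N - depth\, R$. Moreover, Theorem \ref{theorem_H_dim}(a) gives $depth\, M \leq depth\, R$, hence $depth\, M + depth\, N - depth\, R \leq depth\, N$, so this first argument is in fact the smaller of the two. Substituting, we obtain $depth\, M\otimes_R N \geq depth\, M + depth\, N - depth\, R$, completing the induction. The only subtlety is the case analysis coming from the depth lemma, which collapses cleanly precisely because finiteness of $CM\mbox{-}dim_R(M)$ forces $depth\, M \leq depth\, R$; without that inequality the minimum in the depth lemma could pick up the wrong summand.
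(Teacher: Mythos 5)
Your proof is correct and follows essentially the same route as the paper's: a one-step syzygy induction that bottoms out in \textbf{Q2}, with the depth lemma applied to the tensored syzygy sequence. The only immaterial differences are that you resolve $M$ rather than $N$, index the induction by $CM\mbox{-}dim$ rather than by decreasing depth (equivalent via Theorem \ref{theorem_H_dim}\ref{thm_AB}), and close the depth-lemma case analysis using $depth\, M \leq depth\, R$ where the paper uses $depth\, N < depth\, R$ together with Lemma \ref{lemma_Gdim_inequality}.
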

\begin{proof} We work by decreasing induction on $depth\, N.$ If either $M$ or $N$ has zero Cohen-Macaulay dimension then the equality holds by {\bf Q2}, which is already proved. Thus, without loss of generality, we may assume that $\max\{depth\, M,depth\, N\} < depth\, R.$ Consider the following sequence \begin{align*} \label{seq_N_res}
	0 \rar\Omega_1(N) \rar Q_0 \rar N \rar 0.
	\end{align*} By depth lemma $depth\, \Omega_1(N) = depth\, N + 1.$ Further, $M$ and $\Omega_1(N)$ are non-zero Tor-independent $R$-modules. Inductively, the pair $(M,\Omega_1(N))$ satisfies the above inequality. If $depth\, M > depth\, M \otimes N,$ then by Lemma \ref{lemma_Gdim_inequality}, $depth\,M \otimes \Omega_1(N) = depth\,M \otimes N + 1$ and thus the pair $(M,N)$ also satisfies the inequality. 

	If $depth\, M \leq depth\, M \otimes N$, then using the fact that $depth\, N < depth\, R,$ we get a strict inequality $$depth\, M \otimes N + depth\, R > depth\, M + depth\, N.$$ Thus in either case, the inequality holds.

\end{proof}

\begin{corollary} \label{corollary_depth_reduct}
	Let $R$ be a local ring of positive depth. Let $M$ and $N$ be Tor-independent $R$-modules such that $depth\, R > \max\{depth\, M, depth\, N\}.$ Then there exists an element $x$ which is regular on $R$ and $\overline{R}$-modules $X$ and $Y$ which are Tor-independent, such that $depth\, M = depth\, X$ and $depth\, N = depth\, Y.$ 
\end{corollary}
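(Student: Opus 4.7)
The plan is to replace the pair $(M,N)$ by a suitable pair of first syzygies, whose depths are strictly positive, and then apply Lemma \ref{lemma_depth_reduct} after choosing a regular element that behaves well on the tensor product. Write $a = depth\, M$, $b = depth\, N$, $d = depth\, R$ with $d > \max\{a,b\}$, and set $M_1 := \Omega_1(M)$ and $N_1 := \Omega_1(N)$. Applying the depth lemma to $0 \to M_1 \to P_0 \to M \to 0$, where $depth\, P_0 = d > a$, gives $depth\, M_1 = a+1 \geq 1$, and similarly $depth\, N_1 = b+1 \geq 1$. The dimension-shift isomorphism $Tor_i^R(M_1, N_1) \cong Tor_{i+2}^R(M,N) = 0$ for $i \geq 1$ shows the pair $(M_1, N_1)$ remains Tor-independent, and finiteness of Cohen-Macaulay dimension passes to syzygies by Theorem \ref{theorem_H_dim}.

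Next, I would invoke Theorem \ref{theorem_main_inequality} for the pair $(M_1, N_1)$ to obtain $depth\, M_1 \otimes N_1 \geq (a+1)+(b+1) - d = a+b+2-d$. When this lower bound is at least $1$, prime avoidance produces an element $x \in m$ regular on each of $R$, $M_1$, $N_1$, and $M_1 \otimes N_1$; Lemma \ref{lemma_depth_reduct} then yields the required conclusion with $X := \overline{M_1}$ and $Y := \overline{N_1}$, which are Tor-independent $\overline{R}$-modules of depths $a$ and $b$.

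The step I expect to be the main obstacle is the regime where $a+b+2-d \leq 0$, in which the inequality above does not force positive depth of the tensor product. The natural fix is to replace $\Omega_1$ by higher syzygies $\Omega_k(M), \Omega_\ell(N)$ with $k \geq d-a$ and $\ell \geq d-b$, so that both attain the maximal depth $d$; Theorem \ref{theorem_main_inequality} then forces $depth\, \Omega_k(M) \otimes \Omega_\ell(N) \geq d$, and Lemma \ref{lemma_depth_reduct} produces a Tor-independent pair of $\overline{R}$-modules, each of depth $d-1$. To bring these back down to depths $a$ and $b$ over $\overline{R}$, I would iteratively quotient out regular elements of $\overline{R}$, using Theorem \ref{theorem_main_inequality} applied over $\overline{R}$ at each stage to keep the tensor product of positive depth, and Corollary \ref{cor_spect} to identify the relevant $Tor$-terms so that Tor-independence is preserved along the way. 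The principal technical difficulty is the asymmetric descent: since different numbers of quotients are needed to reach $a$ versus $b$, one module must stop being reduced before the other, and maintaining Tor-independence through this one-sided reduction requires careful bookkeeping of the vanishing $(0 :_{X \otimes Y} y)$ at each step.
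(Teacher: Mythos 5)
Your first paragraph is fine and matches the paper's starting point: pass to $(\Omega_1(M),\Omega_1(N))$, whose depths are $depth\,M+1$ and $depth\,N+1 \geq 1$ by the depth lemma (using $depth\,R > \max\{depth\,M, depth\,N\}$), and which remain Tor-independent by dimension shifting. But the second half of your argument has two problems. First, Theorem \ref{theorem_main_inequality} is not available here: it presupposes that $M$ and $N$ have finite Cohen--Macaulay dimension and that statement ${\bf P}$ holds, neither of which is a hypothesis of this corollary (which is later used in Lemma \ref{lemma_depth_0_general} under the assumption ${\bf P0}$ only). Second, even granting that theorem, you correctly identify that the bound $a+b+2-d$ can be nonpositive, and your proposed repair via higher syzygies and an ``asymmetric descent'' is not carried out --- you acknowledge the bookkeeping difficulty rather than resolving it. As written, the proof is incomplete precisely in the regime it most needs to cover.

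The missing idea is much more elementary. Since $Tor_1^R(M,\Omega_1(N)) \cong Tor_2^R(M,N) = 0$, tensoring $0 \to \Omega_1(M) \to P_0 \to M \to 0$ with $\Omega_1(N)$ gives an injection
$$\Omega_1(M)\otimes_R\Omega_1(N) \hookrightarrow P_0\otimes_R\Omega_1(N) \cong \Omega_1(N)^{\oplus r},$$
and the right-hand side has positive depth because $depth\,\Omega_1(N) = depth\,N+1 > 0$. A nonzero submodule of a module of positive depth has positive depth (no associated prime equals $m$), and $\Omega_1(M)\otimes\Omega_1(N)\neq 0$ by Nakayama (both syzygies are nonzero since neither $M$ nor $N$ is free, their depths being less than $depth\,R$). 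Hence $depth\,\Omega_1(M)\otimes\Omega_1(N) > 0$ unconditionally, one may choose $x$ regular on $R$, $\Omega_1(M)$, $\Omega_1(N)$ and their tensor product, and a single application of Lemma \ref{lemma_depth_reduct} finishes the proof with $X=\overline{\Omega_1(M)}$ and $Y=\overline{\Omega_1(N)}$. This is exactly the paper's argument; it needs no inequality from Theorem \ref{theorem_main_inequality} and no iteration.
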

\begin{proof}
We choose an element $x$ which is regular on $R, \Omega_1(M)$ and $\Omega_1(N).$ Define  $X =\overline{ \Omega_1(M)}$ and $Y = \overline{ \Omega_1(N)}.$ Since $depth\, \Omega_1(M) \otimes \Omega_1(N) > 0,$ we can apply Lemma \ref{lemma_depth_reduct} on the Tor-independent pair $(\Omega_1(M),\Omega_1(N)),$ to conclude that $(X,Y)$ are Tor-independent with $depth\, X = depth\, M$ and $depth\, Y = depth\, N.$ 
\end{proof}

\begin{lemma} \label{lemma_depth_0_general}
	Assume that ${\bf P0}$ holds. Let $R$ be a local ring of positive depth. Let $M,N$ be depth $0$ $R$-modules with finite Cohen-Macaulay dimensions. Then $M,N$ are not Tor-independent. 
\end{lemma}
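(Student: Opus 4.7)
The plan is to prove the lemma by induction on $d := depth\, R$. The base case $d = 1$ is precisely the hypothesis ${\bf P0}$, so there is nothing to do.

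For the inductive step, assume $d \geq 2$ and suppose, for contradiction, that $M$ and $N$ form a Tor-independent pair of depth-$0$ $R$-modules with finite Cohen-Macaulay dimension. Since $\max\{depth\, M, depth\, N\} = 0 < d = depth\, R$, Corollary \ref{corollary_depth_reduct} applies and produces an element $x \in m$ regular on $R$ together with a Tor-independent pair $(X,Y)$ of $\overline{R}$-modules, where $X = \overline{\Omega_1(M)}$ and $Y = \overline{\Omega_1(N)}$, and $depth\, X = depth\, Y = 0$.

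Next I would verify that $X$ and $Y$ retain finite Cohen-Macaulay dimension over $\overline{R}$. By Theorem \ref{theorem_H_dim}(b), $CM\mbox{-}dim_R(\Omega_1(M)) = \max\{CM\mbox{-}dim_R(M) - 1, 0\} < \infty$, and similarly for $\Omega_1(N)$. Since $x$ is regular on $R$, $\Omega_1(M)$ and $\Omega_1(N)$, the second clause of Theorem \ref{theorem_H_dim}(c) then yields $CM\mbox{-}dim_{\overline{R}}(X) = CM\mbox{-}dim_R(\Omega_1(M)) < \infty$ and analogously $CM\mbox{-}dim_{\overline{R}}(Y) < \infty$. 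These modules are also nonzero: as $depth\, M, depth\, N < depth\, R$, neither $M$ nor $N$ is free, so $\Omega_1(M)$ and $\Omega_1(N)$ are nonzero, and Nakayama keeps them nonzero after reducing modulo $x \in m$.

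Since $depth\, \overline{R} = d - 1 \geq 1$, the inductive hypothesis applied to the ring $\overline{R}$ and the pair $(X,Y)$ asserts that $X$ and $Y$ are not Tor-independent, contradicting the output of Corollary \ref{corollary_depth_reduct}. I do not expect a genuine obstacle in executing this plan: the heavy lifting is already encapsulated in Corollary \ref{corollary_depth_reduct} and Theorem \ref{theorem_H_dim}, and the remainder is a clean induction on the depth of the base ring.
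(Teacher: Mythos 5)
Your proof is correct and follows the paper's own argument, which is exactly the one-line reduction "by repeated application of Corollary \ref{corollary_depth_reduct} and Theorem \ref{theorem_H_dim}, reduce to $depth\, R = 1$ and contradict {\bf P0}"; you have merely written out the induction and the preservation of finite CM-dimension and nonzeroness in detail.
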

\begin{proof} By repeated application of Corollary \ref{corollary_depth_reduct} and Theorem \ref{theorem_H_dim}, we reduce to the case $depth\, R = 1$ where it contradicts {\bf P0}.
\end{proof}

\begin{lemma} \label{lemma_main} Assume that ${\bf P}$ holds. Let $R$ be a local ring of positive depth. Let $M$ and $N$ be Tor-independent $R$-modules with finite Cohen-Macaulay dimensions. Then $$depth\, M + depth \, N - depth\, R \geq 0.$$ 
\end{lemma}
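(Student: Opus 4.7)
The plan is to argue by induction on $d := depth\, R$, deriving a contradiction from the hypothesis $depth\, M + depth\, N < d$. The base case $d = 1$ is immediate from Lemma \ref{lemma_depth_0_general}, which rules out both depths being $0$. For the inductive step $d \geq 2$, Lemma \ref{lemma_depth_0_general} again excludes the possibility that both depths vanish, and I split the analysis into two cases.

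In the case $depth\, M = 0$ and $depth\, N \geq 1$, the goal is to force $depth\, N = d$. Suppose for contradiction $depth\, N < d$. Since $d > \max\{0, depth\, N\}$, Corollary \ref{corollary_depth_reduct} produces a Tor-independent pair of $\overline{R}$-modules (with finite CM-dimension by Theorem \ref{theorem_H_dim}) of depths $0$ and $depth\, N$ over a ring of depth $d - 1$, and the inductive hypothesis then forces $depth\, N \geq d - 1$, hence $depth\, N = d - 1$. Next, pass to the Tor-independent pair $(\Omega_1(M), N)$: the depth lemma applied to $0 \rar \Omega_1(M) \rar P_0 \rar M \rar 0$ gives $depth\, \Omega_1(M) = 1$, and tensoring this sequence with $N$ (exact by the Tor-vanishing) and applying the depth lemma forces $depth\, \Omega_1(M) \otimes N \geq 1$. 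Lemma \ref{lemma_depth_reduct} now applies and delivers a Tor-independent $\overline{R}$-pair of depths $(0, d - 2)$; the inductive hypothesis would require $0 + (d - 2) \geq d - 1$, which is false.

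In the case $depth\, M, depth\, N \geq 1$, suppose for contradiction $depth\, M + depth\, N < d$. If $depth\, M \otimes N \geq 1$, then Lemma \ref{lemma_depth_reduct} applies directly to $(M, N)$, and the inductive hypothesis on the reduced pair yields $depth\, M + depth\, N \geq d + 1$, contradicting the assumption. If $depth\, M \otimes N = 0$, then since $depth\, N \geq 1$, the inequality $depth\, M + depth\, N < d$ forces $depth\, M < d - 1$, so the depth lemma yields $depth\, \Omega_1(M) = depth\, M + 1$, and the tensored sequence once more forces $depth\, \Omega_1(M) \otimes N \geq 1$. Applying Lemma \ref{lemma_depth_reduct} to $(\Omega_1(M), N)$ and invoking the inductive hypothesis on the reduced pair gives $depth\, M + (depth\, N - 1) \geq d - 1$, i.e., $depth\, M + depth\, N \geq d$, again a contradiction.

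The main obstacle is the scenario $depth\, M \otimes N = 0$ (or one of the module depths is $0$), where Lemma \ref{lemma_depth_reduct} cannot be applied to $(M, N)$ directly. The uniform workaround is the $\Omega_1$-trick: replacing $M$ by its first syzygy simultaneously raises $depth\, M$ and $depth\, M \otimes N$ by $1$ (the latter via the tensored short exact sequence and Tor-independence), making the reduction of Lemma \ref{lemma_depth_reduct} available on the pair $(\Omega_1(M), N)$ and allowing the inductive hypothesis to close the argument.
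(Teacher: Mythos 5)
Your proof is correct and follows essentially the same route as the paper's: induction on $depth\, R$, using the syzygy trick to make all relevant depths positive and then Lemma \ref{lemma_depth_reduct} to descend to a ring of smaller depth, the key point in both arguments being that replacing one module by its first syzygy and reducing modulo a regular element preserves the defect $depth\, M + depth\, N - depth\, R$. The only difference is organizational: the paper normalizes once (reducing to $depth\, R > depth\, M \geq depth\, N > 0$ via {\bf Q2} and Lemma \ref{lemma_depth_0_general}) and takes $\Omega_1(N)$, whereas you run an explicit case analysis and take $\Omega_1(M)$.
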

\begin{proof} We induct on $depth\, R.$ The claimed inequality holds by {\bf P} when $depth\, R = 1.$ Since {\bf P} $\implies${\bf Q2}, the inequality follows when $depth\, R = \max\{depth\, M,depth\, N\}.$ Thus, without loss of generality, we may assume that $depth\, R > depth\, M \geq depth\, N.$  By Lemma \ref{lemma_depth_0_general}, we may further assume that $depth\, M > 0.$ Thus $M$ and $\Omega_1(N)$ are Tor-independent $R$-modules with positive depth. The containment $M \otimes \Omega_1(N) \subset M \otimes Q_0 $ shows that $depth\, M\otimes \Omega_1(N)$ is positive. Let $x$ be an element which is regular on  $R, M, \Omega_1(N)$ and $M \otimes \Omega_1(N).$ By Lemma \ref{lemma_tor_vanishing}, $\overline{M}$ and $\overline{\Omega_1(N)} $ are Tor-independent $\overline{R}$-modules. Going modulo $x$, we get \begin{align*}
		depth\, \overline{M}+ depth\, \overline{\Omega_1(N)} - depth\, \overline{R} =& depth\, \overline{M}  + depth\, N - depth\, R + 1\\ = & depth\, M + depth\, N - depth\, R < 0.
	\end{align*} contradicting the inductive hypothesis.
\end{proof}

\begin{proof}[Proof of Theorem \ref{theorem_main}] Let $d \geq 2$ be the smallest integer for which there is a counterexample on a depth $d$ ring $R.$ By {\bf Q2}, we can assume that $depth\, R > \max\{depth\, M, depth\, N\}.$ Suppose that the inequality in Theorem \ref{theorem_main_inequality} is strict, then by Lemma \ref{lemma_main} we have $$depth\, M \otimes N > depth\, M + depth\, N - depth\, R \geq 0.$$ By Lemma \ref{lemma_depth_reduct}, we can reduce the depth and get a counterexample in lower depth which contradicts the minimality of $d.$
	
\end{proof}

\begin{proof}[Proof of Theorem \ref{corollary_main}] The dimension $0$ case is trivial.  For higher dimensions, we use Theorem \ref{theorem_gerko}  to show that the Cohen-Macaulay dimensions of the modules involved are finite and then apply Theorem \ref{theorem_main}. 
\end{proof}

\end{document}